\tikzstyle{edge} = [fill,opacity=.5,fill opacity=.5,line cap=round, line join=round, line width=50pt]
\theoremstyle{definition}
\newtheorem{theorem}{Theorem}[section]
\newtheorem{conjecture}[theorem]{Conjecture}
\newtheorem{remark}[theorem]{Remark}
\newtheorem{lemma}[theorem]{Lemma}
\newtheorem{definition}[theorem]{Definition}
\newtheorem{example}[theorem]{Example}
\newtheorem{proposition}[theorem]{Proposition}
\newtheorem{corollary}[theorem]{Corollary}
\newtheorem{thm}{Theorem}
\theoremstyle{remark}
\newtheorem*{ackn}{Acknowledgments}
\DeclareMathAlphabet{\mathpzc}{OT1}{pzc}{m}{it}
\setlist[description]{font=\normalfont\itshape\space}
\newcommand{\lcov}{\vartriangleleft}
\newcommand{\symm}{\mathfrak{S}}
\newcommand{\odd}{D_o}
\newcommand{\swap}{\overset{\circ}{\leftrightarrow}}
\newcommand{\lra}{\leftrightarrow}
\DeclareMathOperator{\avoid}{Av}
\DeclareMathOperator{\Av}{Av}
\DeclareMathOperator{\perm}{Perm}
\DeclareMathOperator{\Perm}{Perm}
\DeclareMathOperator{\flip}{flip}
\newcommand{\ub}[1]{\!\underbracket[.5pt][1pt] {#1}}
\newcommand{\ol}[1]{\overline {#1}}
\begin{document}

\title{Odd diagrams, Bruhat order, and pattern avoidance}%

\date{}

\author[F.~Brenti]{Francesco Brenti}\address{Dipartimento di Matematica 
					Universit\`{a} di Roma ``Tor Vergata''
					Via della Ricerca Scientifica, 1 
					00133 Roma, Italy }
	\email{brenti@mat.uniroma2.it}
	
\author[A.~Carnevale]{Angela Carnevale}
\address{School of Mathematical and Statistical Sciences, National University of Ireland,
  Galway, Ireland}
	\email{angela.carnevale@nuigalway.ie}
	
\author[B.~E.~Tenner]{Bridget Eileen Tenner}
\address{Department of Mathematical Sciences, DePaul University, Chicago, IL, USA}
\email{bridget@math.depaul.edu}

\keywords{}%

\subjclass[2010]{Primary 05A05; Secondary 05A15.
}

\begin{abstract}%
  The odd diagram of a permutation is a subset of the classical
  diagram with additional parity conditions. In this paper, we study
  classes of permutations with the same odd diagram, which we call odd
  diagram classes. First, we prove a conjecture relating odd diagram
  classes and 213- and 312-avoiding permutations. Secondly, we show
  that each odd diagram class is a Bruhat interval. Instrumental to
  our proofs is an explicit description of the Bruhat edges that link
  permutations in a class.
\end{abstract}

\maketitle
\section{Introduction}

Odd analogues of well-known combinatorial objects and statistics
associated with permutations (and, more generally, with Weyl and
Coxeter group elements) have been recently considered and studied
(see, for instance,
\cite{BC2/17,BC/17,BC/19,BC/20,BS/20,KV,SV/13,Stembridge/19,Sun/18}).
In particular, odd analogues of permutation diagrams, called \emph{odd
  diagrams}, were introduced and studied in~\cite{BC/20}. It is well
known that (classical) diagrams of permutations are in bijection with
permutations themselves, and they capture interesting related
features. For instance, the size of the diagram of a permutation
equals its number of inversions (that is, its length), and the diagram
constitutes a main tool in the definition of the Schubert variety
associated with the permutation.  The odd diagram of a permutation
(see Definition~\ref{def odd diag}) is a subset of the diagram, its
size equals the number of \emph{odd inversions}---its \emph{odd
  length}, and can be used to define a corresponding \emph{odd
  Schubert variety} \cite{BC/20}.

It is easy to see that odd diagrams are not in bijection with
permutations. As we show in this paper, however, when passing from
diagrams to odd diagrams, we trade faithful encodings of permutations
for a rich combinatorial structure within each \emph{odd diagram
  class} (for a definition see Section~\ref{sec:injectivity}).  In
this article, we carry out an in-depth analysis of these classes,
relating them to well-studied notions such as pattern avoidance and
Bruhat order in symmetric groups.

The following is our first main result, which implies in particular
\cite[Conjecture 6.1]{BC/20}.
\begin{thm}\label{thm:avoid conjecture}
Every odd diagram class contains at
 most  one permutation avoiding the pattern $213$ and at most one avoiding
 $312$.  If these permutations  exist, they are, respectively, the maximum and the
 minimum  elements of the class with respect to the Bruhat order.
\end{thm}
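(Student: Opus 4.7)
The plan is to deduce the theorem from a structural lemma characterising the Bruhat covers that link permutations within the same odd diagram class (the ``explicit description of Bruhat edges'' advertised in the abstract). I expect that lemma to assert: a Bruhat cover $\sigma \lessdot \tau$ with $\sigma,\tau$ in the same class $C$ is induced by a transposition turning a $213$ pattern of $\sigma$ (in specific, parity-controlled positions) into a $312$ pattern of $\tau$, and, conversely, every such $213$ pattern of $\sigma\in C$ lifts to a cover $\sigma\lessdot \tau$ that stays in $C$. Concretely, if $\sigma(a)\sigma(b)\sigma(c)$ is the pattern in question, the cover is the transposition swapping the values at positions $a$ and $c$, which is precisely a length-increasing move.

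Granting the edge characterization, the first step is bookkeeping. If $\sigma\in C$ contains a $213$ pattern, then $\sigma$ admits an upward Bruhat cover inside $C$, so $\sigma$ is not Bruhat-maximal in $C$; contrapositively, every $213$-avoider in $C$ is Bruhat-maximal within $C$. Dually, every $312$-avoider in $C$ is Bruhat-minimal in $C$. Iterating the upward swap further shows that every $\tau\in C$ is linked by a saturated chain in $C$ to a $213$-avoider whenever one exists, and dually to a $312$-avoider from above.

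The second step is to upgrade ``Bruhat-maximal in $C$'' to ``the unique Bruhat-maximum of $C$'', which simultaneously yields uniqueness and the max/min conclusion. For this I would invoke the paper's companion result that $C$ is itself a Bruhat interval $[\sigma^-,\sigma^+]$: the interval has a unique top, so any Bruhat-maximal element of $C$ must coincide with $\sigma^+$, forcing the $213$-avoider (if it exists) to equal $\sigma^+$, and dually the $312$-avoider to equal $\sigma^-$.

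The principal obstacle is the edge characterization itself: one must verify that the proposed transposition preserves $D_o$, which reduces to a careful parity check of which cells enter or leave the odd diagram when the values at positions $a$ and $c$ swap, and to ruling out that any other cover between elements of $C$ could arise. A secondary concern, should the interval theorem turn out to depend on Theorem~\ref{thm:avoid conjecture} (and hence threaten circularity), is to establish uniqueness directly: one strategy is to pin down a canonical choice of $213$ pattern at each stage (say the lexicographically extremal one) so that the upward cover procedure is deterministic and any iteration from $\tau\in C$ terminates at the same $213$-avoider, and symmetrically for $312$.
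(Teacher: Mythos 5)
Your argument hinges on a converse that is false: it is not true that every $213$-pattern of $\sigma\in C$ (even one in parity-admissible positions) lifts to a Bruhat cover staying in $C$, and consequently the inference ``$\sigma$ contains a $213$-pattern $\Rightarrow$ $\sigma$ is not Bruhat-maximal in $C$'' fails. For a concrete counterexample, take $\sigma=2143\in\symm_4$: it contains the $213$-pattern $2\,1\,4$, yet its odd diagram class is the singleton $\{2143\}$ (the swap producing $4123$ violates condition (R3) of Theorem~\ref{thm:legal swaps}, since the value $3$ in position $4$ lies in $[2,4]$; see also Figure~\ref{fig:S4}). Legality of a Bruhat edge implies that it is a pattern swap, but not conversely, and Theorem~\ref{thm:illegal pattern swaps persist} is devoted precisely to such \emph{illegal} patterns. (A smaller logical slip: the statement you call the contrapositive, ``every $213$-avoider is Bruhat-maximal,'' is not the contrapositive of your implication; but the real problem is that the implication itself is false.) With step~1 broken, neither the claim that a $213$-avoider is maximal in $C$ nor the claim that every $\tau\in C$ is linked upward by a saturated chain to the $213$-avoider survives.

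What the paper proves instead is a \emph{relative} statement, and this is the idea you are missing. Given two \emph{distinct} elements $v\sim w$, Theorem~\ref{thm:equal odd means patterns} locates a matched pair of patterns at the first value on which $v$ and $w$ disagree --- a $213$ in one and a $312$ in the other --- and Theorem~\ref{thm:intermediaries} shows that \emph{those particular} swaps are legal, producing $\ol{v}_w,\ol{w}_v$ in the class with $\ell(\ol{v}_w)>\ell(v)$ and $\ell(\ol{w}_v)<\ell(w)$. Uniqueness of the avoiders is then immediate from Theorem~\ref{thm:equal odd means patterns} alone (two distinct $213$-avoiders cannot coexist, since one member of any distinct pair must contain a $213$), and the extremality is obtained not from the interval theorem but by a well-ordering argument: if $v$ avoids $312$ and $u$ is chosen minimal in $\{z\in\Perm_n(D): v\not\leq z\}$, then $\ol{u}_v$ lies in the class, is strictly below $u$, and is still not above $v$, a contradiction (Corollary~\ref{cor:length conjecture}). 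In particular the circularity you worry about does not arise --- Theorem~\ref{thm:bruhat intro} is proved later and independently of this result --- but your proposed fallback (a canonical, e.g.\ lexicographically extremal, choice of $213$-pattern) would inherit the same defect, since the chosen pattern still need not be legal.
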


This is proved in Sections~\ref{sec:injectivity} and \ref{sec:legal moves}
(cf.\ Corollaries~\ref{cor:injective} and~\ref{cor:length conjecture})
as a consequence of a detailed analysis of \emph{legal moves} carried
out in Section~\ref{sec:legal moves}, and a certain notion of
\emph{connectivity} within odd diagram classes.

Our second main result shows that odd diagrams partition each
symmetric group in a particularly pleasant way (see also
Theorem~\ref{thm:bruhat}).

\begin{thm}\label{thm:bruhat intro}
The subset of $\symm_n$ having a given odd diagram is a Bruhat interval.
\end{thm}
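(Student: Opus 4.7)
The plan is to establish simultaneously that each odd diagram class $C$ has unique Bruhat-extremal elements and that every permutation Bruhat-sandwiched between them also lies in $C$. For the first half I would build on Theorem~\ref{thm:avoid conjecture}, which provides at most one $213$-avoider and at most one $312$-avoider per class; to promote "at most one" to "exactly one," I would exploit the classification of legal moves developed in Section~\ref{sec:legal moves}. Starting from any $w \in C$, iterate Bruhat-increasing legal moves: each such move is a covering relation in Bruhat order that preserves the odd diagram, so the procedure stays inside $C$ and terminates at some $w^{+} \in C$ from which no upward legal move is available. The key claim, which I expect to extract from the case analysis of Section~\ref{sec:legal moves}, is that a permutation admitting no upward legal move must avoid $213$; by Theorem~\ref{thm:avoid conjecture}, $w^{+}$ is then the (unique) Bruhat maximum of $C$. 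A dual argument with downward legal moves produces the unique Bruhat minimum $w^{-}$, which must avoid $312$.

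For the second half---the convexity of $C$ in Bruhat order---I would show that $[w^{-}, w^{+}] \subseteq C$. Given $v \in \symm_n$ with $w^{-} \le v \le w^{+}$, the goal is to argue that the odd diagram of $v$ coincides with the common odd diagram of the class. The most promising route is to combine two ingredients: (i) the connectivity of $C$ under legal moves, which guarantees a saturated chain of legal steps from $w^{-}$ to $w^{+}$, and (ii) the rigidity of odd inversions forced by the $312$-avoidance of $w^{-}$ and $213$-avoidance of $w^{+}$. Concretely, for each cell $(i,j)$ of the diagram, the conditions $w^{-} \le v$ and $v \le w^{+}$, together with the extremal pattern-avoidance of $w^{\pm}$, should pin down whether $(i,j)$ is an inversion of $v$ and, critically, whether its parity matches the odd diagram of the class. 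A clean alternative is induction on Bruhat length: if $w^{-} \lcov u \le v$, show using the legal-move description that $u \in C$, then apply the inductive hypothesis inside the smaller interval $[u, w^{+}]$.

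The main obstacle will be precisely this convexity step. Connectivity under legal moves gives a strong structural grip on $C$ from the inside, but a priori says nothing about permutations $v$ that may lie in $[w^{-}, w^{+}]$ without being reachable from $w^{\pm}$ by legal moves; ruling out such "Bruhat intruders" requires translating the extremal pattern-avoidance of $w^{-}$ and $w^{+}$ into a genuine combinatorial control of the odd inversion set of every intermediate $v$. I would expect the Bruhat subword criterion applied to reduced expressions of $w^{\pm}$, combined with the parity bookkeeping at the heart of the legal-move analysis, to be the mechanism that turns pattern-avoidance at the endpoints into odd-diagram rigidity throughout the interval.
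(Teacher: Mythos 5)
Your proposal has two genuine gaps. The first is the key claim in your first half: that a permutation admitting no upward legal move must avoid $213$. This is false, and the paper's own example shows it: the class $\Perm_7(D_o(7461325))$ has Bruhat maximum $7461523$, which contains the $213$-pattern in positions $2,4,5$ (values $4,1,5$); that pattern is simply \emph{illegal}, so no upward legal move is available even though the pattern is present. More fundamentally, Theorem~\ref{thm:avoid conjecture} only guarantees \emph{at most} one $213$-avoider and one $312$-avoider per class, and these need not exist (indeed Theorem~\ref{thm:illegal pattern swaps persist} shows illegal patterns persist across an entire class), so your plan to ``promote at most one to exactly one'' cannot work. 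The paper instead constructs the extremal elements directly by a greedy column-by-column placement governed by the diagram $D$ and the admissible parities (Theorems~\ref{thm:bottomchain} and~\ref{thm:topchain}), with no appeal to pattern avoidance of the extrema.

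The second gap is the convexity step, which you correctly identify as the main obstacle but do not actually resolve: ``should pin down whether $(i,j)$ is an inversion of $v$'' and an appeal to the subword criterion plus ``parity bookkeeping'' is not an argument, and your inductive alternative (show every cover $u$ of $w^-$ below $w^+$ lies in $C$) is precisely the hard content being assumed. The paper's mechanism for ruling out ``Bruhat intruders'' is quite different and is the real substance of the proof: first produce one maximal chain from bottom to top lying entirely in the class (Lemma~\ref{lem:legalcover} and Proposition~\ref{pro:maxchain}), then prove that any saturated $3$-chain in the class completes to a full rank-$2$ Boolean interval inside the class (Proposition~\ref{pro:square}), and finally invoke the fact that all maximal chains of a Bruhat interval are connected by flips (Proposition~\ref{pro:flipconn}) to sweep the entire interval into the class. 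Without an analogue of the square lemma and the flip-connectivity argument, your sketch does not close.
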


The characterization of legal moves and Theorem~\ref{thm:bruhat intro}
imply that, with respect to the right weak order, odd diagram classes
exhibit the opposite behavior. Namely, in right weak order each odd
diagram class is an antichain (cf.\ Corollary \ref{cor:antichain}).

\vspace{1em}

The paper is organized as follows. In Section~\ref{sec:notation}, we
collected some notation, definitions and preliminaries. The next
sections focus on relationships between permutations having the same
odd diagram. As we will see in Section~\ref{sec:injectivity}, a key
role in the study of this property is played by the permutation
patterns $213$ and $312$. This sets the stage for
Section~\ref{sec:legal moves}, which develops a method for traversing
the permutations in an odd diagram class. In this section we also
complete the proof of Theorem~\ref{thm:avoid conjecture}, resolving in
particular the length conjecture \cite[Conjecture 6.1]{BC/20}. In
Section~\ref{sec:length}, we analyze the consequences of legality and study 
so-called \emph{illegal patterns}.  We prove Theorem~\ref{thm:bruhat
  intro} in the last section. We show that each odd diagram class has
a unique Bruhat minimal and a unique Bruhat maximal element and we
build on the theory of legal moves to show that in an odd diagram
class there is always a maximal chain of elements within the class. We
conclude the paper with some remarks and open questions regarding the
intervals arising from odd diagram classes.

\medskip

%%%%%%%%%%%%%%%%%%%%%%%%%%%%%%%%%%%%%%%%%%%%

\section{Notation and preliminaries}\label{sec:notation}

%%%%%%%%%%%%%%%%%%%%%%%%%%%%%%%%%%%%%%%%%%%%

For $n\in\mathbb N$, we let $\symm_n$ denote the symmetric group of
degree $n$. We regard $\symm_n$ as a Coxeter group generated by the
simple transpositions $S=\{(i\, i+1):i=1,\ldots n-1\}$. The set of
reflections of $\symm_n$ is
$T=\{w^{-1}sw:w\in \symm_n,\, s\in S\}=\{(a\,b):1\leq a<b\leq
n\}$. The Coxeter length of a permutation $w\in\symm_n$ is denoted
$\ell(w)$.  It is well known (see, e.g., \cite[Proposition 1.5.2]{BB})
that
\[
\ell(w)=| \{ (i,j) \in [n]^2 : i<j,\,  w(i)>w(j) \}|.
\]

The \emph{Bruhat graph} of $\symm_n$ is the directed graph
$B(\symm_n)$ having $\symm_n$ as its vertex set and where, for
$u,v \in \symm_n$, $u \rightarrow v$ if and only if $v\, u^{-1} \in T$
and $\ell(u) < \ell(v)$.  We say that $\{ u,v \}$ is a {\em Bruhat
  edge} if either $u \rightarrow v$ or $v \rightarrow u$, and denote
this by $u \leftrightarrow v$.

The {\em Bruhat order} on $\symm_n$ is the partial order, which we
denote by $\leq$, which is the transitive closure of
$B(\symm_n)$. Unless explicitly stated, we always regard $\symm_n$ as
partially ordered by the Bruhat order. We follow \cite[Chapter
3]{StaEC1} for notation and terminology concerning posets.  The
following characterization of Bruhat order covering relations in the
symmetric groups is well known (see, e.g., \cite[Lemma 2.1.4]{BB}) and
will be repeatedly used in the sequel.
\begin{proposition}\label{pro:Bruhat cover}
	Let $u,v \in \symm_n$. Then the following conditions are equivalent:
	\begin{itemize}
		\item $u$ is covered by $v$ in Bruhat order (written $u\lcov v$);
		\item there are $1 \leq i < j \leq n$ such that $v=u (i \, j)$, $u(i) < u(j)$,
		and $\{ k \in [n] : i<k<j, u(i)<u(k)<u(j) \}= \emptyset$.
	\end{itemize}
\end{proposition}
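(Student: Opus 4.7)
The plan is to reduce everything to a single length-change formula: for $1 \le i < j \le n$ and $v = u(i\,j)$ with $u(i) < u(j)$, I claim
\[
\ell(v) = \ell(u) + 1 + 2 \cdot |\{k : i < k < j,\ u(i) < u(k) < u(j)\}|.
\]
I would prove this by comparing $u$ and $v$ pair-by-pair over position pairs $(p,q)$ with $p<q$. Pairs disjoint from $\{i,j\}$ contribute nothing, since $u$ and $v$ agree on those positions. For any $p \notin [i,j]$, the two pairs of positions that involve $p$ together with $i$ or $j$ swap roles between $u$ and $v$, but their combined inversion count is unchanged because $\{v(i),v(j)\} = \{u(i),u(j)\}$. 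For each $p$ with $i<p<j$, a direct check shows that both pairs $(i,p)$ and $(p,j)$ flip from non-inversion to inversion precisely when $u(i) < u(p) < u(j)$ (contributing $+2$), and are otherwise inert. The pair $(i,j)$ itself flips from non-inversion (since $u(i)<u(j)$) to inversion (since $v(i) = u(j) > u(i) = v(j)$), contributing $+1$. Summing gives the formula.

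With this formula in hand, both directions of the equivalence are short. For the implication from the set-emptiness condition to $u \lcov v$, the formula gives $\ell(v) = \ell(u)+1$ and $vu^{-1} = (u(i)\,u(j)) \in T$, so $u \to v$ is a Bruhat edge, placing $u < v$ in Bruhat order; since Bruhat order is graded by $\ell$, a length-one gap is a cover. Conversely, assume $u \lcov v$. Then $u < v$ in Bruhat order, so there is a chain in $B(\symm_n)$ from $u$ to $v$; gradedness forces the chain to be a single edge and $\ell(v) = \ell(u)+1$. Writing $vu^{-1} = (a\,b)$ with $a<b$ and setting $i = u^{-1}(a), j = u^{-1}(b)$, I may relabel so that $i<j$. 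The alternative $u(i) > u(j)$ is ruled out by applying the formula to the reverse swap, which would yield $\ell(u) > \ell(v)$; hence $u(i) < u(j)$, and the formula then forces the middle-set to have size zero.

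The main subtlety is avoiding circularity when invoking the grading of Bruhat order by $\ell$, since Bruhat order has just been defined as the transitive closure of $B(\symm_n)$ rather than in terms of length. The cleanest remedy is to prove the length formula first, entirely independently of any order-theoretic fact, and then use it to establish gradedness: any edge $u \to v$ in $B(\symm_n)$ with $\ell(v) - \ell(u) = 1 + 2m > 1$ can be refined by interpolating, for some $k$ in the middle-set, the intermediate element $u(i\,k)$ (which lies strictly above $u$ and strictly below $v$ in length), and then finishing by induction on $m$. Once gradedness is available, the two implications above close the argument.
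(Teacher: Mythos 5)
The paper does not actually prove this proposition; it is stated as a known fact with a pointer to \cite[Lemma 2.1.4]{BB}, so there is no in-paper argument to compare against. Your proof is the standard textbook route: the length-change formula $\ell(u(i\,j))=\ell(u)+1+2m$ (with $m$ the size of the middle set), proved by the pair-by-pair bookkeeping you describe, is correct, and both directions of the equivalence follow from it as you say. Your attention to the potential circularity---Bruhat order is defined here as the transitive closure of $B(\symm_n)$, so compatibility with $\ell$ must be derived, not assumed---is exactly the right concern, and the forward direction needs only the immediate fact that $u<w$ forces $\ell(u)<\ell(w)$.

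The one genuine soft spot is the refinement step. You interpolate $w:=u(i\,k)$ for $k$ in the middle set and justify its placement between $u$ and $v$ by noting $\ell(u)<\ell(w)<\ell(v)$. But length-betweenness does not give Bruhat-betweenness, and $w\rightarrow v$ is \emph{not} an edge of $B(\symm_n)$: the quotient $w^{-1}v=(i\,k)(i\,j)$ is a $3$-cycle, not a reflection. So as written you have not produced an element of the open interval $(u,v)$, which is what contradicting $u\lcov v$ (equivalently, establishing the refinement) requires. The fix is short: since $u(i)<u(k)<u(j)$, one checks directly that $v=w\,(i\,j)\,(k\,j)$ and that each factor increases length (for the first, $w(i)=u(k)<u(j)=w(j)$; for the second, the values swapped are $u(i)<u(k)$), so $u\rightarrow w\rightarrow w(i\,j)\rightarrow v$ is a directed path in $B(\symm_n)$; each of its three edges has length gap strictly smaller than $1+2m$, and your induction then closes. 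With that step supplied, the argument is complete.
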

Recall that a permutation $u\in \symm_n$ is said to \emph{contain the
	pattern}~$\alpha=\alpha_1\cdots \alpha_k$  if there exist $1\leq
i_1<\dots<i_k\leq n$ such that  $u(i_1),\dots,u(i_k)$ are in
the same relative order as  $\alpha_1,\dots,\alpha_k$. 
A permutation $u\in \symm_n$ is said to \emph{avoid} the pattern~$\alpha$
if it does not contain the pattern $\alpha$.  
We denote with $\Av_n(\alpha)=\{u\in \symm_n : u \mbox{ avoids
}\alpha\}$  the set of permutations of degree $n$ avoiding~$\alpha$.

\medskip

We graph $w = w(1)\cdots w(n) \in \symm_n$ using matrix coordinates:
the point $(i,w(i))$ appears in the $i$th row from the top of the grid
and the $w(i)$th column from the left. We let $G(w)$ denote the graph
of $w$.

\begin{example}
The permutation $41325$ is graphed in Figure~\ref{fig:G(41325)}. Related objects appear in Figures~\ref{fig:D(41325)} and~\ref{fig:Do(41325)}.
\end{example}

\begin{figure}[htbp]
$$\begin{tikzpicture}[scale=.5]
\draw[step=1.0,gray,very thin,xshift=-0.5cm,yshift=-0.5cm] (1,1) grid (6,6);
\foreach \x in {(4,5), (1,4), (3,3), (2,2), (5,1)} {\fill \x circle (4pt);}
%% x coordinates give the one-line notation
%% y coordinates decreasing from n
\end{tikzpicture}$$
\caption{$G(41325)$.}\label{fig:G(41325)}
\end{figure}

The \emph{diagram} $D(w)$ of a permutation $w$ is
$$D(w) := \{ (i,j) \in [n]^2 : j< w(i), w^{-1}(j)>i \}.$$ 
The diagram can be seen by drawing lines to the south (\emph{legs})
and to the east (\emph{arms}) of each point $(i,w(i)) \in G(w)$, and
keeping the empty boxes that remain (see Figure~\ref{fig:D(41325)}).
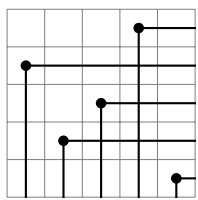
\begin{figure}[htbp]
$$\begin{tikzpicture}[scale=.5]
\draw[step=1.0,gray,very thin,xshift=-0.5cm,yshift=-0.5cm] (1,1) grid (6,6);
\begin{scope}
\clip (.5,.5) rectangle (5.5,5.5);
\foreach \x in {(4,5), (1,4), (3,3), (2,2), (5,1)}
  {\fill \x circle (4pt);
  \draw \x[thick] --++(0,-6);
  \draw \x[thick] --++(6,0);}
\end{scope}
\end{tikzpicture}$$
\caption{$D(41325)$ consists of the four empty boxes.}\label{fig:D(41325)}
\end{figure}

\noindent
\begin{definition}\label{def odd diag}
  The \emph{odd diagram} of a permutation $w$, as defined
  in~\cite{BC/20}, is the subset of $D(w)$ defined by
$$
\odd(w) := \{ (i,j) \in D(w) : \, i \not \equiv w^{-1}(j) \pmod{2}  \}.
$$
\end{definition}
We will often mark the elements of $D_o(w)$ by stars $*$, and refer to them as such.
The odd diagram of $41325 \in \symm_5$ is depicted in Figure~\ref{fig:Do(41325)}.
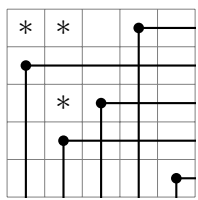
\begin{figure}[htbp]
$$\begin{tikzpicture}[scale=.5]
\draw[step=1.0,gray,very thin,xshift=-0.5cm,yshift=-0.5cm] (1,1) grid (6,6);
\begin{scope}
\clip (.5,.5) rectangle (5.5,5.5);
\foreach \x in {(4,5), (1,4), (3,3), (2,2), (5,1)}
  {\fill \x circle (4pt);
  \draw[thick] \x --++(0,-6);
  \draw[thick] \x --++(6,0);}
\end{scope}
\foreach \x in {(1,5),(2,3),(2,5)} {\draw \x node {$*$};}
\end{tikzpicture}$$
\caption{$\odd(41325)$ consists of the three boxes that are marked by
  $*$s.}\label{fig:Do(41325)}
\end{figure}

\medskip

%%%%%%%%%%%%%%%%%%%%%%%%%%%%%%%%%%%%%%%%%%%%

\section{Injectivity and permutation patterns}\label{sec:injectivity}

%%%%%%%%%%%%%%%%%%%%%%%%%%%%%%%%%%%%%%%%%%%%

Permutations in $\symm_n$ can be partitioned by their odd diagrams,
and we will indicate that two permutations are in the same odd diagram class by
$\sim$; that is, we write $v \sim w$ if $\odd(v) = \odd(w)$.  We begin
by noting that if two permutations have the same odd diagrams, then
the leftmost columns of their graphs must be the same.

\begin{lemma}\label{lem:leftmost column}
If $v \sim w$ then $v^{-1}(1) = w^{-1}(1)$.
\end{lemma}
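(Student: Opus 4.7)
The plan is to show that the cells of $\odd(w)$ in column~$1$ already determine $w^{-1}(1)$, which immediately yields the lemma.

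First I would analyze which cells lie in column~$1$ of the classical diagram. By definition, $(i,1) \in D(w)$ requires $1 < w(i)$ and $w^{-1}(1) > i$. The first condition excludes only the row $i=w^{-1}(1)$, which is also excluded by the second. Hence
\[
\{i : (i,1) \in D(w)\} = \{1,2,\ldots, w^{-1}(1)-1\},
\]
an initial segment of $[n]$ whose length equals $w^{-1}(1)-1$. Next, applying the parity condition in the definition of $\odd(w)$, the cells in column~$1$ of the odd diagram are exactly
\[
\{i : (i,1) \in \odd(w)\} = \{i \in [1,w^{-1}(1)-1] : i \not\equiv w^{-1}(1) \pmod 2\}.
\]

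Now I would extract $w^{-1}(1)$ from this set. Let $k = w^{-1}(1)$. The integer $k-1$ has the opposite parity from $k$, so whenever $k \geq 2$ the cell $(k-1,1)$ is starred, and it is clearly the star with the largest row index in column~$1$. Thus
\[
k = 1 + \max\{i : (i,1) \in \odd(w)\},
\]
with the convention that the maximum of the empty set is $0$ (which correctly handles $k=1$).

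Since the right-hand side depends only on $\odd(w)$, and $\odd(v)=\odd(w)$ by hypothesis, we conclude $v^{-1}(1) = w^{-1}(1)$. The argument is essentially one calculation; the only subtle point is noticing that the parity offset between $k-1$ and $k$ guarantees that the ``topmost empty row'' in column~$1$ is always marked with a star, so I do not expect any genuine obstacle.
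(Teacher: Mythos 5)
Your proof is correct and follows essentially the same route as the paper's: both recover $w^{-1}(1)$ from column $1$ of the odd diagram by observing that the cell $(w^{-1}(1)-1,\,1)$ is always starred thanks to the parity offset, so the lowest star in that column sits directly above the point $(w^{-1}(1),1)$. Your version is simply more explicit, in particular in handling the empty-column case $w^{-1}(1)=1$.
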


\begin{proof}
  If the leftmost column of $\odd(v)$ is empty, then clearly $v(1)=1$
  and the lemma follows. More generally, since we are looking at the
  leftmost column of the (odd) diagrams, no boxes are eliminated by
  arms. Therefore the lowest star in $\odd(v) = \odd(w)$ sits directly
  north of the leftmost point in the graphs of $v$ and $w$.
\end{proof}

This enables us to prove something quite useful about permutations $v \sim w$.

\begin{theorem}\label{thm:equal odd means patterns}
If $v \sim w$ for $v \neq w$, then $v$ has a $213$-pattern and $w$ has a $312$-pattern,
 or conversely.
\end{theorem}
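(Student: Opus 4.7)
The plan is to locate the smallest value $k$ at which $v$ and $w$ differ in placement, use a parity argument in column $k$ of the odd diagram to pin down the parities of the two relevant positions, and then use a second parity argument one row further down to produce a value strictly less than $k$ that sits between those two positions. The resulting triple of positions will simultaneously exhibit a $213$-pattern in one permutation and a $312$-pattern in the other.

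Concretely, I would let $k$ be the smallest value with $v^{-1}(k)\neq w^{-1}(k)$ (such a $k$ exists because $v\neq w$, and $k\geq 2$ by Lemma~\ref{lem:leftmost column}), and assume without loss of generality that $p:=v^{-1}(k)<w^{-1}(k)=:q$. Since the values $1,\dots,k-1$ occupy identical positions in $v$ and $w$, and neither $p$ nor $q$ is one of those positions, I immediately get $w(p)>k$ and $v(q)>k$.

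The first key step is to examine the cell $(p,k)$: it is absent from $D(v)$ because $v(p)=k$, but it belongs to $D(w)$ because $w(p)>k$ and $w^{-1}(k)=q>p$. The assumption $\odd(v)=\odd(w)$ therefore forces $(p,k)\notin\odd(w)$, i.e., $p\equiv q\pmod 2$, and in particular $q\geq p+2$. I would then repeat the analysis on $(p+1,k)$: it is still absent from $D(v)$; if $w(p+1)>k$, then $(p+1,k)\in D(w)$ and, since $p+1\not\equiv q\pmod 2$, it would lie in $\odd(w)\setminus\odd(v)$, a contradiction. Hence $w(p+1)<k$, and the minimality of $k$ yields $j:=v(p+1)=w(p+1)<k$.

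The two patterns can then be read off directly at positions $p<p+1<q$: in $v$ the values are $(k,j,v(q))$ with $j<k<v(q)$, a $213$-pattern, while in $w$ the values are $(w(p),j,k)$ with $j<k<w(p)$, a $312$-pattern. The case $v^{-1}(k)>w^{-1}(k)$ is handled by swapping the roles of $v$ and $w$. The only step that requires real care is the parity bookkeeping in the two cells $(p,k)$ and $(p+1,k)$; once that is done, the pattern extraction is immediate.
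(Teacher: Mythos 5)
Your argument is correct and follows essentially the same strategy as the paper's: take the minimal value $k$ at which $v$ and $w$ differ, use the fact that certain cells of column $k$ lie in one diagram but not the other to force parity and to locate a value less than $k$ strictly between the two positions, and read off the two patterns. The only difference is that you extract the small entry at position $p+1$ (adjacent to the \emph{left} endpoint of the pattern, via the two cells $(p,k)$ and $(p+1,k)$), whereas the paper uses the single cell $(q-1,k)$ to place it at $q-1$, adjacent to the \emph{right} endpoint --- a choice that matters only for the vincular refinement $2\ub{13}$/$3\ub{12}$ recorded in Corollary~\ref{cor:theorem fallout}, not for the theorem itself.
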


\begin{proof}
  Let $k+1$ be minimal such that $v^{-1}(k+1) \neq w^{-1}(k+1)$. By
  Lemma~\ref{lem:leftmost column}, $k \ge 1$.

To ease notation, set
\begin{align*}
a &:= v^{-1}(k) = w^{-1}(k),\\
b &:= v^{-1}(k+1), \text{ and}\\
c &:= w^{-1}(k+1).
\end{align*}
So $(a,k) \in G(v) \cap G(w)$, while $(b,k+1) \in G(v)$ and
$(c,k+1) \in G(w)$. Without loss of generality, assume that $b < c$.

Because $\odd(v) = \odd(w)$, the point $(c-1,k+1)$ must lie in the arm
of $(c-1,d) \in G(w)$ for some $d \le k$. By minimality of $k$, we
have that $(c-1,d) \in G(v)$, as well. Again by minimality of $k$, it
must also be the case that
$$v(c) > k+1 \text{ \ and \ } w(b) > k+1.$$
From this we find the desired patterns.
\end{proof}

We illustrate the proof of Theorem~\ref{thm:equal odd means patterns}
in Figure~\ref{fig:equal odd means patterns}. Note that the point
$(a,k) \in G(v) \cap G(w)$ is not needed for either of the patterns
unless $d = k$.
\begin{figure}[t]
\begin{tikzpicture}[scale=.5]
\draw[step=1.0,gray,very thin,xshift=-0.5cm,yshift=-0.5cm] (.5,.5) grid (8.5,8.5);
\foreach \x in {(1,2),(4,7),(7,1)} {\fill \x circle (4pt);}
\draw (.5,2) node[left] {$(c-1,d)$};
\draw (7,0) node[below] {$(c,v(c))$};
\draw (4,8) node[above] {$(b,k+1)$};
\draw (4,-2) node {$G(v)$};
\draw[thick, dashed] (3.5,0) -- (3.5,8);
\end{tikzpicture}
\hspace{.5in}
\begin{tikzpicture}[scale=.5]
\draw[step=1.0,gray,very thin,xshift=-0.5cm,yshift=-0.5cm] (.5,.5) grid (8.5,8.5);
\foreach \x in {(1,2),(6,7),(4,1)} {\fill \x circle (4pt);}
\draw (.5,2) node[left] {$(c-1,d)$};
\draw (4,0) node[below] {$(c,k+1)$};
\draw (6,8) node[above] {$(b,w(b))$};
\draw (4,-2) node {$G(w)$};
\draw[thick, dashed] (3.5,0) -- (3.5,8);
\end{tikzpicture}
\caption{The graphs of the permutations $v$ and $w$ as described in
  the proof of Theorem~\ref{thm:equal odd means patterns}, showing a
  $213$-pattern in $v$ and a $312$-pattern in $w$. The graphs are
  identical to the left of the dashed line.}\label{fig:equal odd means
  patterns}
\end{figure}
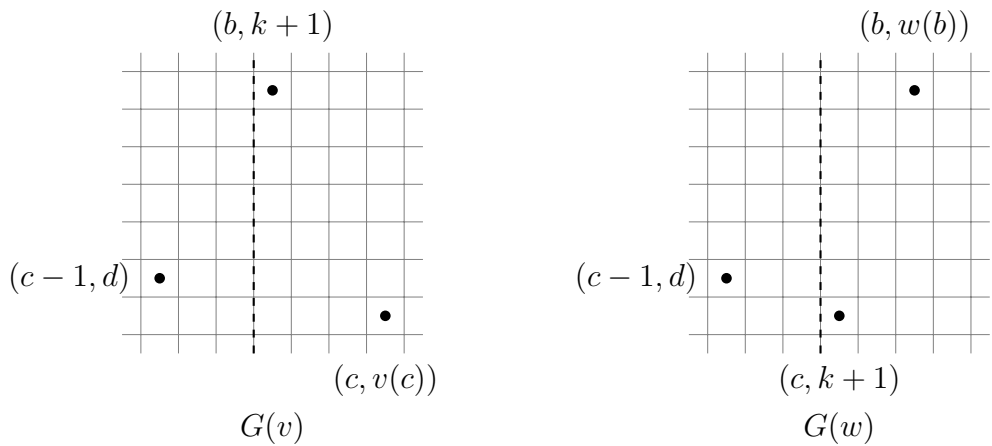

A \emph{vincular} permutation pattern is one in which pairs of letters
may be required to appear consecutively. For example, the permutation
$41325$ contains the classical pattern $321$ (demonstrated by the
substring $432$), but it does not contain the vincular pattern
$\ub{32}1$ because there is no occurrence of $321$ in which the first
two letters are consecutive.  In this language of vincular patterns,
then, Theorem~\ref{thm:equal odd means patterns} actually shows a
slightly stronger result.

\begin{corollary}\label{cor:theorem fallout}
  Suppose that $v \sim w$ for $v \neq w$. Then, without loss of
  generality, $v$ has a $2\ub{13}$-pattern and $w$ has a
  $3\ub{12}$-pattern such that
\begin{itemize}
\item these two patterns occupy the same positions in $v$ and $w$,
\item the value of the ``$2$'' is the same in each pattern, and
\item $G(v)$ and $G(w)$ coincide at all points $(a,b)$, for $b$ less
  than that shared value of ``$2$.'' \end{itemize}
\end{corollary}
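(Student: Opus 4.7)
The plan is to revisit the proof of Theorem~\ref{thm:equal odd means patterns} and read off the stronger vincular statement from the very same three points that witness each pattern. I retain the notation: $k+1 := \min\{j : v^{-1}(j) \neq w^{-1}(j)\}$, $a := v^{-1}(k) = w^{-1}(k)$, $b := v^{-1}(k+1)$, $c := w^{-1}(k+1)$ with the assumption $b<c$ taken without loss of generality, and the point $(c-1, d) \in G(v) \cap G(w)$ with $d \leq k$ extracted from $\odd(v) = \odd(w)$. The $213$-pattern in $v$ is witnessed by $(b, k+1)$, $(c-1, d)$, $(c, v(c))$, and the $312$-pattern in $w$ by $(b, w(b))$, $(c-1, d)$, $(c, k+1)$.

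The key observation is that the three positions in each case are $b$, $c-1$, $c$, and that these satisfy $b < c-1 < c$. From $b < c$ one gets $b \leq c-1$; but $b = c-1$ would force $v(c-1) = k+1$, contradicting $v(c-1) = d \leq k$. Hence $b < c-1 < c$, and in each pattern the second and third letters sit in the consecutive positions $c-1$ and $c$. This gives the vincular patterns $2\ub{13}$ in $v$ and $3\ub{12}$ in $w$ at the shared triple of positions $\{b, c-1, c\}$, which is exactly the first bullet.

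The remaining two bullets are then essentially automatic. In each pattern the middle letter (the ``$2$'') has value $k+1$: it is realized as $v(b) = k+1$ on the $v$ side and as $w(c) = k+1$ on the $w$ side, so the shared value of the ``$2$'' is $k+1$, giving the second bullet. For the third bullet, the minimality in the choice of $k+1$ is precisely the statement that $v^{-1}(j) = w^{-1}(j)$ for all $j \leq k$, which says that in each column $j < k+1$ the (unique) points of $G(v)$ and $G(w)$ agree. I do not foresee a genuine obstacle; the corollary is essentially careful bookkeeping of positions from the proof of Theorem~\ref{thm:equal odd means patterns}, with the only small verification being the strict inequality $b < c-1$ noted above.
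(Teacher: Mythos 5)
Your proposal is correct and takes essentially the same approach as the paper: the paper gives no separate proof, presenting the corollary as a direct reading-off of the proof of Theorem~\ref{thm:equal odd means patterns}, which is exactly what you do by identifying the shared positions $b<c-1<c$ (with the last two adjacent, yielding the vincular patterns), the shared ``$2$''-value $k+1$, and the agreement of the graphs in columns $1,\dots,k$ from the minimality of $k+1$. Your explicit check that $b<c-1$ is a detail the paper leaves implicit, and your argument for it is sound.
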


Theorem~\ref{thm:equal odd means patterns} also shows how the map
$w \mapsto \odd(w)$ behaves on pattern classes, suggesting further
significance to the patterns $213$ and $312$ that appear in the
statement of the theorem, and verifying the first part of Conjecture
6.1 of \cite{BC/20}.

\begin{corollary}\label{cor:injective}
\begin{enumerate}
\item[(a)] The map $w\mapsto \odd(w)$ is injective on
  $\avoid_n(213)$. That is, if $v \neq w$ both avoid $213$, then
  $\odd(v) \neq \odd(w)$. Moreover, we can replace $213$ by $2\ub{13}$
  in both statements.
\item[(b)] The map $w\mapsto \odd(w)$ is injective on
  $\avoid_n(312)$. That is, if $v \neq w$ both avoid $312$, then
  $\odd(v) \neq \odd(w)$. Moreover, we can replace $312$ by $3\ub{12}$
  in both statements.
\item[(c)] For all other $p$ with $|\avoid_n(p)| > 1$, the map
  $w\mapsto \odd(w)$ is not injective on $\avoid_n(p)$.
\end{enumerate}
\end{corollary}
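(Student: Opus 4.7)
Parts (a) and (b) follow immediately from Corollary~\ref{cor:theorem fallout}: if $v \sim w$ with $v \neq w$, that corollary forces, up to swapping, $v$ to contain the vincular pattern $2\ub{13}$ and $w$ to contain $3\ub{12}$. In particular, two distinct members of $\Av_n(2\ub{13})$ cannot share an odd diagram, and since $\Av_n(213) \subseteq \Av_n(2\ub{13})$, the classical statement in (a) follows a fortiori. Part (b) is the symmetric argument with the roles of $2\ub{13}$ and $3\ub{12}$ exchanged.

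For part (c), the plan is to exhibit, for each $p \notin \{213, 312\}$ with $|\Av_n(p)| > 1$, two distinct $v, w \in \Av_n(p)$ satisfying $\odd(v) = \odd(w)$, by splitting according to whether $p$ avoids $123$. If $p$ avoids $123$, I would take $v = 2134\cdots n$ and $w = 3124\cdots n$. A direct calculation gives $D(v) = \{(1,1)\}$ and $D(w) = \{(1,1), (1,2)\}$; because $w^{-1}(2) = 3 \equiv 1 \pmod 2$, the cell $(1,2)$ fails the parity condition in $w$, so $\odd(v) = \odd(w) = \{(1,1)\}$. Each of $v$ and $w$ has exactly one descent, so their length-$k$ patterns are easily enumerated as $\{12\cdots k,\, 2134\cdots k\}$ and $\{12\cdots k,\, 2134\cdots k,\, 3124\cdots k\}$, respectively. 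For $k \geq 4$ every such pattern contains $123$; for $k = 3$ the only such patterns that fail to contain $123$ are $213$ and $312$, both excluded by hypothesis. Hence $p$ is not a pattern of either $v$ or $w$, giving $v, w \in \Av_n(p)$.

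If instead $p$ contains $123$, I would take $v' = n(n-1)\cdots 4 \cdot 213$ and $w' = n(n-1)\cdots 4 \cdot 312$. Each is a union of two decreasing sequences, and hence lies in $\Av_n(123)$; a fortiori both avoid any $p$ containing $123$. A parity-tracked diagram computation shows $D(w') = D(v') \cup \{(n-2, 2)\}$, and the extra cell $(n-2, 2)$ fails the parity condition in $w'$ because $w'^{-1}(2) = n \equiv n-2 \pmod 2$; consequently $\odd(v') = \odd(w')$. Together these two constructions cover every length-$\geq 3$ pattern $p$ with $p \neq 213, 312$, completing (c) (for $|p|\leq 2$ the hypothesis $|\Av_n(p)| > 1$ already fails).

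The main obstacle is the bookkeeping for the first construction: one must enumerate all length-$k$ patterns appearing in $2134\cdots n$ and $3124\cdots n$ and verify that the only ones not containing $123$ are the excluded length-$3$ patterns $213$ and $312$. The single-descent structure of both permutations makes this enumeration straightforward, and the parity analysis of the single extra diagram cell in each construction is a short computation. The cleaner aspect is the separation of cases via $123$-avoidance, which ensures that one of the two natural paddings of the seed pair $(213, 312)$ always delivers a pair of permutations in $\Av_n(p)$ sharing an odd diagram.
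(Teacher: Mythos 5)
Your argument is correct. Parts (a) and (b) are handled exactly as the paper intends: the corollary is stated there without a written proof, as an immediate consequence of Theorem~\ref{thm:equal odd means patterns} and Corollary~\ref{cor:theorem fallout}, and your observation that injectivity on $\Av_n(2\ub{13})$ implies injectivity on the subset $\Av_n(213)$ is the right way to get the classical and vincular statements simultaneously. For part (c) the paper supplies no argument at all, so your explicit constructions are added content rather than an alternative route; I checked both. When $p$ avoids $123$: indeed $\odd(2134\cdots n)=\odd(3124\cdots n)=\{(1,1)\}$, since the only diagram cell of $3124\cdots n$ beyond $(1,1)$ is $(1,2)$, which fails the parity test; and your enumeration of the length-$k$ patterns of these two single-descent permutations is right, so both lie in $\Av_n(p)$ once $p\notin\{213,312\}$. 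When $p$ contains $123$: with $v'=n(n-1)\cdots 4\,213$ and $w'=n(n-1)\cdots 4\,312$ one has $(v')^{-1}(j)\equiv (w')^{-1}(j)\pmod 2$ for every $j$ (the inverses differ only in that the values $2$ and $3$ exchange the positions $n-2$ and $n$, which have the same parity), $D(w')=D(v')\cup\{(n-2,2)\}$, and the extra cell fails the parity test, so $\odd(v')=\odd(w')$ while both permutations avoid $123$ and hence $p$. The only caveat is the degenerate range $n\le 2$, where part (c) as literally stated already fails (e.g.\ $\Av_2(132)=\symm_2$ and the map is injective there), so your implicit assumption $n\ge 3$ is forced by the statement itself and is not a defect of your proof.
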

\begin{proof}
  Parts (a) and (b) are clear.  To prove (c), suppose first that $p$
  has one of the following three forms: $123\cdots k$ (with $k>2$), or
  $2134\cdots k$ (with $k>3$), or $3124\cdots k$ (with
  $k>3$). Then consider the permutations $v = n(n-1)\cdots 54213$ and
  $w = n(n-1)\cdots 54312$.  These both avoid $p$, and they have the
  same odd diagrams.

  Now, suppose that $p$ does not have one of those three forms and
  consider the permutations $v = 213456\cdots (n-1)n$ and
  $w = 312456\cdots(n-1)n$.  The only patterns these permutations
  contain are the three considered above. The two permutations $v$ and
  $w$ have the same odd diagram and, because $p$ is not one of the
  three patterns listed above, they both avoid $p$.
\end{proof}

Let $o_n:=|\{D_o(v):v\in \symm_n\}|$ denote the number of distinct odd
diagrams of permutations in $\symm_n$. It follows from
\cite[Propositions 1 and 3]{Claesson/01} that the number of
permutations in $\Av_n(3\ub{12})$ is given by the $n$th Bell number
$B_n$.  Thus the previous result gives a lower bound for the number of
odd diagrams in degree $n$.

\begin{corollary}
  Let $n\in \mathbb N$. Then $o_n\geq B_n$, where $B_n$ is the $n$th Bell number.
  \end{corollary}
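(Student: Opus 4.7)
The plan is to combine the injectivity statement in Corollary~\ref{cor:injective}(b) with the enumerative fact, attributed to Claesson, that $|\Av_n(3\ub{12})| = B_n$. Concretely, since avoidance of the vincular pattern $3\ub{12}$ is a weaker condition than avoidance of the classical pattern $312$ (a vincular occurrence requires the last two letters to be consecutive), the set $\Av_n(3\ub{12})$ is well-defined and we have $|\Av_n(3\ub{12})| = B_n$ by the cited result.

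From the strengthened form of Corollary~\ref{cor:injective}(b), the map $w \mapsto D_o(w)$ is injective on $\Av_n(3\ub{12})$: if two distinct permutations in $\symm_n$ had the same odd diagram, then by Theorem~\ref{thm:equal odd means patterns} (in its vincular form from Corollary~\ref{cor:theorem fallout}) one of them would contain a $3\ub{12}$-pattern, so at most one representative of each odd-diagram class lies in $\Av_n(3\ub{12})$. Therefore the number $o_n$ of distinct odd diagrams satisfies
\[
o_n \;=\; |\{D_o(v) : v \in \symm_n\}| \;\geq\; |\{D_o(v) : v \in \Av_n(3\ub{12})\}| \;=\; |\Av_n(3\ub{12})| \;=\; B_n,
\]
which is the desired inequality. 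There is essentially no obstacle here: the statement is an immediate packaging of the injectivity corollary with a known enumeration, and the only small subtlety is invoking the vincular (rather than classical) refinement of the injectivity result so as to match Claesson's Bell-number count.
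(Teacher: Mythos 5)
Your proposal is correct and follows the paper's own argument exactly: combine the vincular strengthening in Corollary~\ref{cor:injective}(b) (injectivity of $w \mapsto D_o(w)$ on $\Av_n(3\ub{12})$) with Claesson's enumeration $|\Av_n(3\ub{12})| = B_n$. Your additional remark that $\Av_n(312) \subseteq \Av_n(3\ub{12})$, so the vincular version is the one needed to reach the Bell numbers, is also accurate.
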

  The first values of the sequence $\{o_n\}_{n\in \mathbb N}$ are:
  $1$, $2$, $5$, $17$, $ 70$, $351$, $2041$, $13732$, $103873$,
  $882213$ (cf.\ also \cite[A335926]{OEIS}).

 %%%%%%%%%%%%%%%%%%%%%%%%%%%%%%%%%%%%%%%%%%
  
  \section{Legal moves}\label{sec:legal moves}

 %%%%%%%%%%%%%%%%%%%%%%%%%%%%%%%%%%%%%%%%%%

\begin{definition}
  Recall from Section~\ref{sec:notation} that two permutations
  $v \neq \ol{v}$ are connected by a Bruhat edge if they agree on all
  but two values. A particular Bruhat edge $v \leftrightarrow \ol{v}$
  is \emph{legal} if $v \sim \ol{v}$. If $\ol{v}=vt\sim v$ we will
  also sometimes say that the transposition $t$ is \emph{legal for}
  $v$.
\end{definition}

Because a transposition only changes the positions of two values in
the permutation, legality depends only on the non-overlapping portions
of the points, arms, and legs that were affected by that
transposition.

\begin{lemma}\label{lem:only check rectangle}
  Consider a Bruhat edge $v\leftrightarrow \ol{v}$ where the points
  that move are as indicated in red and blue in Figure~\ref{fig:legal
    moves}. The Bruhat edge is legal if and only if none of the boxes
  that include only red or only blue in Figure~\ref{fig:legal moves}
  belong to $\odd(v)$ or to $\odd(\ol{v})$.
\end{lemma}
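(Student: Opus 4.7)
The plan is to compare $D(v)$ with $D(\ol{v})$, and the corresponding odd diagrams, box by box. Write $\ol{v} = v\cdot(i,j)$ with $i<j$, $a := v(i)$, and $b := v(j)$, where $a<b$; the two red points are $(i,a),(j,b)\in G(v)$, and the two blue points are $(i,b),(j,a)\in G(\ol{v})$.

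First, for any box $(r,c)$ with $r\notin\{i,j\}$ and $c\notin\{a,b\}$: since $v$ and $\ol{v}$ agree at positions other than $i,j$, and their inverses agree at values other than $a,b$, both the diagram condition and the star parity condition $r\not\equiv w^{-1}(c)\pmod 2$ are unchanged. So such boxes contribute nothing to $\odd(v)\triangle\odd(\ol{v})$.

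Second, I would carry out a direct casewise computation on the remaining boxes (those in row $i$ or $j$, or in column $a$ or $b$). The key facts are: the corner $(i,a)$ always lies in $D(\ol{v})\setminus D(v)$; the box $(i,c)$ for $a<c<b$ lies in $D(\ol{v})\setminus D(v)$ iff $v^{-1}(c)>i$; and the box $(r,a)$ for $i<r<j$ lies in $D(\ol{v})\setminus D(v)$ iff $v(r)>a$. These are the ``only red'' boxes of the figure. The mirror statements identify $(j,c)$ and $(r,b)$ as the ``only blue'' boxes in $D(v)\setminus D(\ol{v})$. Every remaining shared box in rows $i,j$ or columns $a,b$ either lies outside the interval $[a,b]$ (where $w^{-1}(c)$ is unchanged) or lies in column $a$ or $b$ with $r<i$ (where only the star condition could potentially differ).

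To conclude, the forward direction is routine: any only-red box lies exclusively in $D(\ol{v})$, so being a star there would place it in $\odd(\ol{v})\setminus\odd(v)$, contradicting $v\sim\ol{v}$; the only-blue case is analogous. For the reverse direction, assume no only-red or only-blue box is a star. Then the corner $(i,a)\in D(\ol{v})$ is not a star in $\ol{v}$, forcing $i\equiv \ol{v}^{-1}(a)=j\pmod 2$. This same-parity condition guarantees that the star condition at the shared boxes in columns $a$ and $b$ agrees between $v$ and $\ol{v}$, since $w^{-1}(a)$ and $w^{-1}(b)$ each lie in $\{i,j\}$ with the same parity in both permutations. Combined with the first step, this yields $\odd(v)=\odd(\ol{v})$. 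The main obstacle is this parity subtlety: tracking only $D(v)\triangle D(\ol{v})$ would fail when $i\not\equiv j\pmod 2$, but the corner $(i,a)$ acts as a witness ensuring that the no-star hypothesis secretly enforces $i\equiv j\pmod 2$.
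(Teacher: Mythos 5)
Your argument is correct, and it is substantially more careful than the paper's own proof, which consists of the single sentence that the other boxes are ``not impacted by the points being swapped.'' Read literally, that sentence is false in exactly the place you flag: for a box $(r,c)$ with $r<i$ and $c\in\{m,M\}$ (the two columns of the moved points), membership in $D(v)$ and $D(\ol{v})$ agrees, but the star condition compares $r$ with $w^{-1}(c)$, which the swap changes from $j$ to $i$ or vice versa; when $i\not\equiv j\pmod 2$ such a box can lie in exactly one of the two odd diagrams even though it is not among the boxes the lemma inspects. Your observation that the corner box --- always in the diagram of the longer permutation, and a star there precisely when $i\not\equiv j\pmod 2$ --- serves as a witness that the no-star hypothesis secretly forces $i\equiv j\pmod 2$ is precisely the ingredient needed to rescue the ``if'' direction; the paper only surfaces this point later, inside the proof of Theorem~\ref{thm:legal swaps}, where that box is tied to condition (R1). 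The rest of your case analysis (boxes off the two rows and two columns are untouched; boxes on those rows and columns but outside the rectangle either lie in neither diagram or have unchanged $w^{-1}(c)$; the strictly interior boundary boxes are exactly the only-red/only-blue ones, each lying in at most one of the two diagrams) checks out. In short, you follow the same box-by-box localization strategy the paper intends, but your version closes a genuine gap that the one-line proof leaves open.
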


\begin{proof}
  Whether or not any of the other points is included in an odd diagram
  is not impacted by the points being swapped.
\end{proof}

\begin{figure}[htbp]
\begin{tikzpicture}[scale=.5]
\draw[step=1.0,gray,very thin,xshift=-0.5cm,yshift=-0.5cm] (.5,.5) grid (8.5,8.5);
\begin{scope}
\clip (0,0) rectangle (8,8);
\draw[blue,thick] (6,7) -- (2,7) -- (2,1);
\draw[red,thick] (6,7) -- (6,1) -- (2,1);
\foreach \x in {(6,1),(6,7)}{\draw[thick] \x --++(10,0);}
\foreach \x in {(2,1),(6,1)}{\draw[thick] \x --++(0,-10);}
\foreach \x in {(2,7),(6,1)}{\fill[blue] \x circle (4pt);}
\foreach \x in {(2,1),(6,7)} {\fill[red] \x circle (4pt);}
\end{scope}
\end{tikzpicture}
\caption{In a Bruhat edge $v\leftrightarrow \ol{v}$, the black arms
  and legs will arise for both permutations, whereas the red points
  and segments appear for only one permutation, and the blue points
  and segments only appear for the other.} \label{fig:legal moves}
\end{figure}
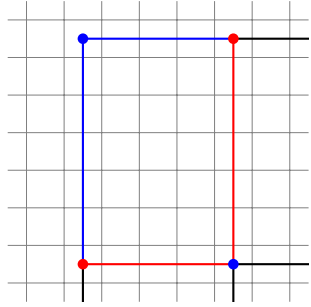

The following result is fundamental in all that follows.

\begin{theorem}\label{thm:legal swaps}
  Let $v$ be a permutation and $\ol{v}:=v \, (ij)$ for $i < j$. Set
  $m := \min\{v(i),v(j)\}$ and $M := \max\{v(i),v(j)\}$. The Bruhat
  edge $v \leftrightarrow \ol{v}$ is legal if and only if the
  following requirements are met:
\begin{enumerate}
\item[(R1)] $i$ and $j$ have the same parity,
\item[(R2)] $v(x) < m$ for all $x \in \{i+1,i+3,\ldots,j-1\}$, and
\item[(R3)] $v(y) \not\in [m,M]$ for all $y \in \{j+1,j+3,\ldots\}$.
\end{enumerate}
\end{theorem}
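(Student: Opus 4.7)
The plan is to exploit Lemma~\ref{lem:only check rectangle}, which confines the entire argument to the rectangle with corners $\{i,j\}\times\{m,M\}$. Up to swapping $v$ and $\ol{v}$, I may assume $v(i)=m$ and $v(j)=M$, so that $\ol{v}(i)=M$ and $\ol{v}(j)=m$. A short direct check from the definition of the diagram shows that the boxes in this rectangle that can possibly differ between $D(v)$ and $D(\ol{v})$ partition into four families: row $i$ (only in $D(\ol{v})$), row $j$ (only in $D(v)$), column $m$ between rows $i$ and $j$ (only in $D(\ol{v})$), and column $M$ between rows $i$ and $j$ (only in $D(v)$).

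Next I translate legality into parity conditions on each family. The corner box $(i,m)$ always lies in $D(\ol{v})$, and its parity condition is exactly $i\equiv j\pmod 2$, yielding (R1). Interior boxes $(a,m)$ or $(a,M)$ with $i<a<j$ lie in the relevant diagram precisely when $v(a)>m$ or $v(a)>M$ respectively, and each carries the parity test $a\equiv j\pmod 2$. Interior boxes $(i,b)$ or $(j,b)$ with $m<b<M$ lie in the relevant diagram precisely when $v^{-1}(b)>i$ or $v^{-1}(b)>j$, and they carry the tests $i\equiv v^{-1}(b)$ or $j\equiv v^{-1}(b)$, both mod $2$. Assuming (R1), the forward direction reduces to showing that (R2) kills the bad column boxes, and that (R2) and (R3) together kill the bad row boxes: whenever $v^{-1}(b)$ has parity opposite to $i$ and $j$, either $v^{-1}(b)\in(i,j)$ and (R2) forces $b<m$, or $v^{-1}(b)>j$ and (R3) forces $b\notin[m,M]$, so $b$ cannot lie in $(m,M)$.

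For the converse, failure of each requirement produces an explicit box in the symmetric difference of the two odd diagrams: failure of (R1) gives the corner $(i,m)$; failure of (R2) at some $a\in(i,j)$ with $v(a)>m$ and $a\not\equiv i\pmod 2$ gives $(i,v(a))$ when $v(a)<M$ and $(a,M)$ when $v(a)>M$; failure of (R3) at some $y>j$ with $v(y)\in(m,M)$ and $y\not\equiv j\pmod 2$ gives $(i,v(y))$. The main obstacle I expect is the bookkeeping: tracking the four local regions and their several sub-regimes (corner versus interior, $v(a)$ above or below $M$, $v^{-1}(b)$ inside or outside $(i,j)$) while ensuring every potentially offending box is attributed to exactly one of the three requirements. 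The parity arithmetic itself is clean once one notes that (R1) makes $\{i+1,i+3,\ldots,j-1\}$ and $\{j+1,j+3,\ldots\}$ precisely the indices of parity opposite to $i$, so that (R2) and (R3) become parity-indexed restrictions that dovetail with the parity tests above.
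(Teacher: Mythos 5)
Your proposal is correct and follows essentially the same route as the paper: reduce via Lemma~\ref{lem:only check rectangle} to the boxes in the rows and columns affected by the transposition, match the corner box to (R1), the column boxes to (R2), and the row boxes to (R2) together with (R3). Your write-up is simply a more explicit, coordinate-level version of the paper's three-bullet verification.
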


\begin{proof}
  As suggested by Lemma~\ref{lem:only check rectangle}, we prove the
  result by checking three things: the square marked by the blue point
  in the upper left of Figure~\ref{fig:legal moves}, the squares
  marked by vertical colored segments in that figure, and the squares
  marked by horizontal colored segments in that figure.
\begin{itemize}
\item The upper-left square marked by the blue point will be in one
  (but not both) of the odd diagrams for $v$ and $\ol{v}$ if and only
  if (R1) is not met.
\item For the vertical colored segments to contribute no elements to
  either odd diagram, all boxes in those columns whose heights differ
  in parity to $i$ (or $j$, by (R1)) must lie in the arms of points
  that appear to the left of $m$ in the graph. This is equivalent to
  (R2).
\item For the horizontal colored segments to contribute no elements to
  either odd diagram, there can be no boxes in those rows that are an
  odd distance above points in the graph. In light of (R2), this is
  equivalent to (R3).
\end{itemize}
Thus $v \sim\ol{v}$ if and only if (R1), (R2), and (R3) are met.
\end{proof}

One impact of Theorem~\ref{thm:legal swaps} is that if $v \sim \ol{v}$
is a legal Bruhat edge, then $v$ and $\ol{v}$ differ by the change of
a particular pattern.

\begin{definition}
  Let $v \leftrightarrow \ol{v}$ be a (not necessarily legal) Bruhat
  edge. If the points in which $v$ and $\ol{v}$ differ form the
  endpoints of a $213$-pattern in one of the permutations and a
  $312$-pattern in the other, then we call this a \emph{pattern swap},
  written $v \swap \ol{v}$. The \emph{type of the pattern swap for
    $v$} is the pattern ($213$ or $312$) in $v$ that gets changed to
  make $\ol{v}$.
\end{definition}
\noindent
So, for example, $5431627 \leftrightarrow 3451627$ is not a pattern swap, while
$5431627 \leftrightarrow 5431726$ is a pattern swap of type $213$ for $5431627$ 
(and of type $312$ for $5431726$).

\begin{corollary}
If a Bruhat edge is legal, then it must be a pattern swap.
\end{corollary}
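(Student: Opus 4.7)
The plan is to read off a pattern swap directly from Theorem~\ref{thm:legal swaps}. Write $\ol{v} = v\,(ij)$ with $i<j$ and set $m = \min\{v(i), v(j)\}$. To exhibit a pattern swap, I need to produce a position $k$ with $i < k < j$ such that $v(k) < m$: in that case, the triple $(v(i), v(k), v(j))$, read at positions $(i, k, j)$, realizes a $213$-pattern in $v$ when $v(i) < v(j)$ and a $312$-pattern when $v(i) > v(j)$. Swapping the outer two entries to form $\ol{v}$ keeps $v(k) = \ol{v}(k)$ as the strict minimum among the three values and interchanges the relative order of the outer two, which turns one pattern into the other.

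The candidate for $k$ is forced by the requirements of Theorem~\ref{thm:legal swaps}. Requirement (R1) makes $i$ and $j$ share a parity, so $j \geq i + 2$ and $k := i + 1$ lies strictly between them, with opposite parity to both. Since $i+1$ is an element of the set $\{i+1, i+3, \ldots, j-1\}$ appearing in (R2), that requirement delivers $v(i+1) < m$, which is exactly the inequality I need. Positions $i$ and $j$ are precisely the positions where $v$ and $\ol{v}$ differ, so this completes the verification that $v \swap \ol{v}$.

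I anticipate essentially no obstacle: the corollary is a clean extraction from the legality characterization. Requirement (R3), which constrains positions beyond $j$, is not used; only (R1) (to produce a valid intermediate position of opposite parity to $i$ and $j$) and (R2) (to force the middle value to be the strict minimum) play a role. The one point worth flagging is conceptual rather than technical: the middle letter of the realized three-term pattern must come from a position that is \emph{not} moved by the transposition, and (R1)--(R2) together guarantee that such a position exists and behaves correctly.
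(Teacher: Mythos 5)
Your proof is correct and follows essentially the same route as the paper: both arguments use (R1) to ensure $j \ge i+2$ and (R2) to produce a middle position whose value lies below $m$, the only difference being that you take the middle position to be $i+1$ while the paper takes $j-1$ (both lie in the set $\{i+1,i+3,\ldots,j-1\}$, so either works).
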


\begin{proof}
  Suppose that $v \leftrightarrow \ol{v}$ is legal. By
  Theorem~\ref{thm:legal swaps}, (R1) and (R2) must be
  satisfied. Maintaining the notation from the proof of that theorem,
  (R1) says that $j \ge i+2$, and (R2) says that
  $v(j-1) = \ol{v}(j-1) < m$. Thus, the values in positions
  $\{i,j-1,j\}$ form a $213$-pattern in one of the permutations, and a
  $312$-pattern in the other, so $v \swap \ol{v}$.
\end{proof}

A pattern swap that results from a legal Bruhat edge will
correspondingly be called a \emph{legal} pattern swap. A pattern swap
$v \swap \ol{v}$ for which $\odd(v) \neq \odd(\ol{v})$ is an
\emph{illegal} pattern swap.

\vspace{.5em}

The properties discussed in Corollary~\ref{cor:theorem fallout} are
highly localized, and the permutations might differ substantially
otherwise. However, we can use these results to find much closer
``neighbors'' to a permutation, staying within the same class.

\begin{definition}\label{defn:intermediate permutations}
  Consider $v\sim w$ with $v \neq w$. Maintaining notation from
  Theorem~\ref{thm:equal odd means patterns}, let
  $\big\{(b,k+1), (c-1, d), (c,v(c))\big\}$ be the $213$-pattern found
  in $v$, and let $\big\{(b,w(b)), (c-1,d), (c,k+1)\big\}$ be the
  corresponding $312$-pattern found in $w$. Define permutations
  $\ol{v}_w:= v \, (b\,c)$ and $\ol{w}_v := w \, (b\,c)$.  In other
  words, $v \swap \ol{v}_w$ via the $213$-pattern found in $v$ during
  Theorem~\ref{thm:equal odd means patterns}, and $w \swap \ol{w}_v$
  via the $312$-pattern found in $w$.
\end{definition}

\begin{example}
  Consider $v = 5431627$ and $w = 7461325$. Then $\ol{v}_w =
  5461327$. Repeating the process with $u := \ol{v}_w$ and $w$
  produces $\ol{u}_w = 7461325 = w$.
\end{example}

\begin{remark}\label{rmk:intermediate lengths}
  Notice how the (classical) lengths of $v$ and $w$ compare to those of
the permutations described in Definition~\ref{defn:intermediate
  permutations}:
\begin{equation}%\label{eqn:intermediate lengths}
\ell(v) < \ell(\ol{v}_w) \text{ \ \ and \ \ } \ell(w) > \ell(\ol{w}_v).
\end{equation}\end{remark}
These new permutations act as intermediaries, allowing us to travel
between any two permutations in the same odd diagram class.

\begin{theorem}\label{thm:intermediaries}
Suppose that $v \sim w$ with $v \neq w$. Then $v \sim \ol{v}_w$ and $w \sim \ol{w}_v$. 
\end{theorem}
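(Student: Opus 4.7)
The plan is to apply Theorem~\ref{thm:legal swaps} directly to the Bruhat edge $v \leftrightarrow \ol{v}_w = v\,(b\,c)$ and verify its three requirements, all by exploiting the hypothesis $\odd(v) = \odd(w)$ together with the setup from the proof of Theorem~\ref{thm:equal odd means patterns}. Maintaining the notation established there, I have $a = v^{-1}(k) = w^{-1}(k)$, $v(b) = w(c) = k+1$, $v(c), w(b) > k+1$, and $v(c-1) = w(c-1) = d \le k$, so that Theorem~\ref{thm:legal swaps} is to be applied with $i=b$, $j=c$, $m = k+1$, and $M = v(c)$. The symmetric statement $w \sim \ol{w}_v$ is handled by an identical argument after exchanging the roles of $v$ and $w$ (and replacing $v(c)$ with $w(b)$ as $M$).

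For (R1), I would inspect the box $(b, k+1)$: it lies on $G(v)$, hence not in $D(v)$, but the inequalities $w(b) > k+1$ and $w^{-1}(k+1) = c > b$ show $(b,k+1) \in D(w)$. Since $(b,k+1) \notin \odd(v) = \odd(w)$, the parity constraint defining $\odd(w)$ forces $b \equiv w^{-1}(k+1) = c \pmod 2$. For (R2), fix $x \in \{b+1, b+3, \ldots, c-1\}$. Minimality of $k+1$ means that $v$ and $w$ agree on the preimages of $\{1,\dots,k\}$, so either $v(x)=w(x)\le k$ (and there is nothing to prove) or else $v(x), w(x) \geq k+2$ (using $v(b)=w(c)=k+1$ and $x\neq b,c$). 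In the latter case the box $(x, k+1)$ satisfies $w(x) > k+1$ and $w^{-1}(k+1) = c > x$, so it lies in $D(w)$; the parity $x \not\equiv c \pmod 2$ (from (R1) and the definition of the range for $x$) places it in $\odd(w)$. But $v^{-1}(k+1) = b < x$ kills $(x,k+1)$ from $D(v)$, contradicting $\odd(v)=\odd(w)$. Hence (R2) holds.

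For (R3), fix $y \in \{c+1, c+3, \ldots\}$ and suppose for contradiction that $v(y) \in [k+1, v(c)]$. Since $v(b) = k+1$ and $y \neq b$, in fact $v(y) = t$ with $k+2 \le t \le v(c)$; moreover $t < v(c)$ because $y \neq c$. Then the box $(c,t)$ lies in $D(v)$ (we have $t < v(c)$ and $v^{-1}(t) = y > c$), and the parity condition $c \not\equiv y \pmod 2$ puts it in $\odd(v)$. On the other hand $w(c) = k+1 < t$, so $(c,t) \notin D(w)$, again contradicting $\odd(v) = \odd(w)$. This establishes (R3) and completes the verification that the Bruhat edge $v \leftrightarrow \ol{v}_w$ is legal.

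The main subtlety is making the case analysis in (R2) and (R3) align with the parity windows dictated by Theorem~\ref{thm:legal swaps}, which is why the parity conclusion from (R1) must be established first and then reused: it is precisely the fact that $x$ and $y$ lie on parities opposite to $b \equiv c$ that creates stars in $\odd(w)$ (for (R2)) and in $\odd(v)$ (for (R3)) that have no counterpart on the other side. Once these three requirements are in hand, Theorem~\ref{thm:legal swaps} delivers $v \sim \ol{v}_w$; the companion claim $w \sim \ol{w}_v$ follows by running the same verification with $v$ and $w$ interchanged, now using the $3\ub{12}$-pattern guaranteed by Corollary~\ref{cor:theorem fallout}.
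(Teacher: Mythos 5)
Your proof is correct and follows essentially the same route as the paper: both establish the legality of the Bruhat edge $v \leftrightarrow \ol{v}_w$ by verifying (R1), (R2), (R3) of Theorem~\ref{thm:legal swaps}, using exactly the same witness boxes $(b,k+1)$, $(x,k+1)$, and $(c,v(y))$ to derive a contradiction with $\odd(v)=\odd(w)$ in each case. Your write-up merely spells out details (e.g., why $v(y)\in[k+2,v(c)-1]$ in the (R3) case) that the paper leaves implicit.
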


\begin{proof} We show that $v \sim \ol{v}_w$.  Maintain the notation
  of Theorem~\ref{thm:equal odd means patterns} and Definition
  \ref{defn:intermediate permutations}. Since $v \sim w$, we have that
  $b \equiv c \pmod{2}$ (else $(b,k+1) \in D_o(w) \setminus D_o(v)$).
  Also, $v(x)<k+1$ if $x \in \{ b+1,b+3, \ldots , c-1\}$ (otherwise
  $w(x)>k+1$ by the minimality of $k$, so
  $(x,k+1) \in D_o(w) \setminus D_o(v)$).  Finally,
  $v(x) \notin [k+2,v(c)-1]$ if $x \in \{ c+1,c+3, \ldots \}$ (else
  $(c,v(x)) \in D_o(v)\setminus D_o(w)$). Hence, by Theorem
  \ref{thm:legal swaps}, the Bruhat edge $v \leftrightarrow \ol{v}_w $
  is legal and therefore $v \sim \ol{v}_w$. The proof that
  $w \sim \ol{w}_v$ is similar.
\end{proof}

Note an interesting consequence of Definition~\ref{defn:intermediate
  permutations} and Theorem~\ref{thm:intermediaries}.

\begin{corollary}\label{cor:stepping closer}
  Consider permutations $v \sim w$ that agree for the first $k$, but
  not $k+1$, values (that is, $v^{-1}(i) = w^{-1}(i)$ for all
  $i \le k$, but not for $i = k+1$). Then the permutations $\ol{v}_w $
  and $ w$ agree for the first $k+1$ values, at least, as do
  $\ol{w}_v $ and $ v$.
\end{corollary}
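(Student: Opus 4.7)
The plan is to observe that since Theorem~\ref{thm:intermediaries} already tells us $\ol{v}_w \sim v$ and $\ol{w}_v \sim w$, the content of this corollary has nothing to do with odd diagrams; it is purely a statement about the values of the permutations. So my approach is simply to unfold the notation of Theorem~\ref{thm:equal odd means patterns} and Definition~\ref{defn:intermediate permutations} and chase where the transposition $(b\,c)$ sends each element of $\{1,\ldots,k+1\}$.

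First I would record the key fact that both of the values moved by this transposition, namely $v(b) = k+1$ and $v(c)$, strictly exceed $k$: $v(b)$ by equality, and $v(c)$ because it plays the role of the ``$3$'' in the $213$-pattern of $v$ exhibited in Theorem~\ref{thm:equal odd means patterns}. This immediately implies that for each $i \le k$ one has $v^{-1}(i) \notin \{b,c\}$, hence $\ol{v}_w^{-1}(i) = v^{-1}(i)$, which coincides with $w^{-1}(i)$ by hypothesis. The value $k+1$ is handled by a single computation: $\ol{v}_w(c) = v(b) = k+1$, so $\ol{v}_w^{-1}(k+1) = c = w^{-1}(k+1)$. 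Together these give the first claim.

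The second claim is symmetric: in the corresponding $312$-pattern of $w$ identified in Theorem~\ref{thm:equal odd means patterns} one has $w(c) = k+1$ and $w(b) > k+1$, so the same reasoning with the roles of $v$ and $w$ swapped applies verbatim and yields $\ol{w}_v^{-1}(i) = v^{-1}(i)$ for $i \le k$ and $\ol{w}_v^{-1}(k+1) = b = v^{-1}(k+1)$. There is no real obstacle here; the only point worth being careful about is notational, namely keeping straight which of $v(b),v(c)$ (respectively $w(b),w(c)$) is the ``$2$'' and which is the ``$3$'' of the pattern, so as to justify that the transposition $(b\,c)$ can never displace a value in $\{1,\ldots,k\}$.
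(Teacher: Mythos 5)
Your proof is correct and matches the paper's intent: the paper states this corollary without proof as an immediate consequence of Definition~\ref{defn:intermediate permutations} and Theorem~\ref{thm:equal odd means patterns}, and your verification (that both values moved by $(b\,c)$ exceed $k$, so positions of $1,\ldots,k$ are untouched, while $k+1$ lands at $c=w^{-1}(k+1)$, respectively $b=v^{-1}(k+1)$) is exactly the computation being left implicit.
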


From these results we see that the class of permutations with a given
odd diagram is, in a sense, connected.

\begin{definition}
Let $D$ be an odd diagram, and consider the set
$$\perm_n(D) := \{w \in \symm_n : \odd(w) = D\}.$$
Define the \emph{class graph} $G_{D,n}$ to have vertex set
$\perm_n(D)$, and an edge between vertices $v$ and $v'$ if
$v' = \ol{v}_w$ for some permutation $w \in \perm_n(D)$.
\end{definition}

\begin{corollary}\label{cor:connected graph}
The class graph $G_{D,n}$ is connected.
\end{corollary}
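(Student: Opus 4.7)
The plan is to prove connectedness by a straightforward induction that uses Corollary~\ref{cor:stepping closer} as the engine driving any two permutations in $\perm_n(D)$ toward each other, one shared initial value at a time.

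First, I would fix two distinct permutations $v,w\in\perm_n(D)$ and set $k$ to be the largest integer such that $v^{-1}(i)=w^{-1}(i)$ for all $i\le k$. By Lemma~\ref{lem:leftmost column}, we have $k\ge 1$, and since $v\neq w$ we have $k<n$. I would then apply Theorem~\ref{thm:intermediaries} to the pair $(v,w)$ to obtain $\ol{v}_w\sim v$, which means $\ol{v}_w\in\perm_n(D)$ and, by definition of the class graph, $\{v,\ol{v}_w\}$ is an edge of $G_{D,n}$. Corollary~\ref{cor:stepping closer} then tells us that $\ol{v}_w$ and $w$ agree on the positions of the values $1,2,\ldots,k+1$, so the corresponding invariant for the pair $(\ol{v}_w,w)$ is strictly larger than $k$.

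Next I would induct on the quantity $n-k$, where $k$ is the above invariant. In the base case $k=n$ one has $v=w$ (the empty path works). For the inductive step, the above paragraph exhibits an edge from $v$ to $\ol{v}_w$ and produces a new pair $(\ol{v}_w,w)$ in $\perm_n(D)$ with a strictly smaller value of the induction parameter; the inductive hypothesis supplies a path in $G_{D,n}$ from $\ol{v}_w$ to $w$, and concatenating with the edge $\{v,\ol{v}_w\}$ yields the required path from $v$ to $w$.

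I do not expect a serious obstacle: Theorem~\ref{thm:intermediaries} guarantees that the intermediate permutation $\ol{v}_w$ stays inside the class, and Corollary~\ref{cor:stepping closer} provides exactly the monotone quantity needed to make the induction terminate. The only point to be slightly careful about is confirming that the construction of $\ol{v}_w$ from Definition~\ref{defn:intermediate permutations} is well-defined whenever $v\neq w$ (which is precisely the content of Theorem~\ref{thm:equal odd means patterns}), so that the inductive step is always available until $v$ and $w$ coincide.
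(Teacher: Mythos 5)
Your proof is correct and follows exactly the argument the paper intends: the corollary is stated there without proof precisely because it is the immediate consequence of Theorem~\ref{thm:intermediaries} and Corollary~\ref{cor:stepping closer} that you spell out, iterating the intermediary construction so that the number of agreeing initial values strictly increases at each step. No issues.
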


We conclude this section with the following result, which resolves the
length conjecture \cite[Conjecture 6.1]{BC/20} and concludes the proof
of Theorem~\ref{thm:avoid conjecture}.

\begin{corollary}\label{cor:length conjecture}\
\begin{enumerate}
\item[(a)] If $v \in \perm_n(D)$ is $312$-avoiding, then $v \leq u$
  for all $u \in \perm_n(D)$.
\item[(b)] If $w \in \perm_n(D)$ is $213$-avoiding, then $w \geq u$
  for all $u \in \perm_n(D)$.
\end{enumerate}
\end{corollary}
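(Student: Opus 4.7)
The plan is to prove both parts by iterating the intermediate-permutation construction from Definition~\ref{defn:intermediate permutations}, thereby building a Bruhat chain between the hypothesized extremal element of $\perm_n(D)$ and any other element of the class.

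For part~(a), I would fix $v \in \perm_n(D)$ avoiding $312$ and an arbitrary $u \in \perm_n(D)$. By Theorem~\ref{thm:equal odd means patterns}, between $v$ and $u$ one contains a $213$-pattern and the other a $312$-pattern, and $312$-avoidance of $v$ forces the $312$-pattern onto the $u$ side. Setting $u^{(0)} := u$, the idea is to define $u^{(i+1)} := \ol{u^{(i)}}_v$ via Definition~\ref{defn:intermediate permutations} applied to the pair $(v, u^{(i)})$, continuing while $u^{(i)} \neq v$. This is legitimate at every stage because $v$ is fixed and remains $312$-avoiding, so Theorem~\ref{thm:equal odd means patterns} always places the $213$-pattern on $v$ and the $312$-pattern on $u^{(i)}$.

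Three facts then need verification along the iteration. First, by Theorem~\ref{thm:intermediaries}, each $u^{(i+1)} \sim u^{(i)}$, so the iterates remain inside $\perm_n(D)$ by induction. Second,~\eqref{eqn:intermediate lengths} gives $\ell(u^{(i+1)}) < \ell(u^{(i)})$, and since consecutive iterates differ by a single reflection $(b\,c)$ they form a Bruhat edge, which with the length drop forces $u^{(i+1)} < u^{(i)}$ in Bruhat order. Third, Corollary~\ref{cor:stepping closer} guarantees that $u^{(i+1)}$ agrees with $v$ on a strictly longer initial segment of values than $u^{(i)}$ does, so the iteration must reach $u^{(m)} = v$ within finitely many steps. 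Concatenating yields the chain $v = u^{(m)} < \cdots < u^{(0)} = u$, hence $v \leq u$.

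Part~(b) is entirely dual: $213$-avoidance of $w$ forces the $213$-pattern onto the iterate side, so the analogous iteration $u^{(i+1)} := \ol{u^{(i)}}_w$ produces a length-\emph{increasing} chain (the swap now flips a $213$ into a $312$, adding an inversion via~\eqref{eqn:intermediate lengths}) that terminates at $w$ and gives $u \leq w$. The main point requiring care in both arguments is the inductive re-certification that Theorem~\ref{thm:equal odd means patterns} keeps sorting the $213$- and $312$-patterns onto the intended sides of the pair at every stage; this is exactly what the pattern-avoidance hypothesis on the fixed endpoint buys, and it propagates for free along the iteration.
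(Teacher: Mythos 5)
Your proposal is correct and rests on exactly the same ingredients as the paper's proof: Theorem~\ref{thm:equal odd means patterns} plus the avoidance hypothesis to force the $312$-pattern onto the non-extremal side, then Definition~\ref{defn:intermediate permutations}, Theorem~\ref{thm:intermediaries}, and~\eqref{eqn:intermediate lengths} to produce a comparable neighbor in the class. The only difference is packaging --- the paper takes a Bruhat-minimal counterexample and applies the construction once to get a contradiction, whereas you iterate it (using Corollary~\ref{cor:stepping closer} for termination) to build an explicit descending chain from $u$ to $v$; both are valid and essentially the same argument.
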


\begin{proof}
  Let $u$ be a minimal (with respect to the Bruhat order) element of
  $\{ z \in \Perm_n(D) : v \not \leq z \}$. Then, by
  Theorem~\ref{thm:equal odd means patterns} and our hypothesis, $v$
  has a $213$-pattern and $u$ has a $312$-pattern.  Thus
  $\ol{u}_v < u$ (notation as in Definition \ref{defn:intermediate
    permutations}) which contradicts the minimality of $u$ since, by
  Theorem \ref{thm:intermediaries}, $u \sim \ol{u}_v$. The proof of
  (b) is analogous.
\end{proof}

\medskip

%%%%%%%%%%%%%%%%%%%%%%%%%%%%%%%%%%%%%%%%%%%

\section{Consequences of legality}\label{sec:length}

%%%%%%%%%%%%%%%%%%%%%%%%%%%%%%%%%%%%%%%%%%% 

While legality implies that a Bruhat edge is a pattern swap, not all
pattern swaps are legal. In fact, as we show below, the potential for
illegal pattern swaps is, in a sense, persistent in a class.

\begin{definition}
  If $v$ has an illegal pattern swap of type $\pi$, then we say that
  the pattern represented by $\pi$ is an \emph{illegal pattern} in
  $v$.
\end{definition}

In other words, an illegal pattern is a $213$- or $312$-pattern in $v$
for which the permutation $v'$ obtained by swapping the left and right
letters in the pattern, does not have the same odd diagram as $v$.

\begin{theorem}\label{thm:illegal pattern swaps persist}
  Suppose that $v\in \symm_n$ has an illegal pattern of type
  $\pi$. Then all $w \sim v$ also have illegal patterns of type $\pi$.
\end{theorem}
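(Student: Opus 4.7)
The plan is to reduce the statement, via the connectedness of the class graph, to a local claim about a single legal pattern swap. By Corollary~\ref{cor:connected graph}, any two permutations in $\perm_n(\odd(v))$ are joined by a path of legal Bruhat edges in $G_{\odd(v),n}$, so it suffices to prove the following: if $v$ has an illegal pattern of type $\pi$ and $v' := v \cdot (i\,j)$ is obtained from $v$ via a single legal pattern swap (with $i < j$), then $v'$ also has an illegal pattern of type $\pi$. Induction on the length of a path in $G_{\odd(v),n}$ then yields the theorem.

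Fix the illegal pattern in $v$ at positions $p < q < r$, with $v(q) < \min(v(p), v(r)) =: m$ and $\max(v(p), v(r)) =: M$. By Theorem~\ref{thm:legal swaps}, the illegality of this pattern in $v$ is equivalent to the failure of at least one of the requirements (R1)--(R3) for the pair $(p,r)$ in $v$; call any such failure a \emph{witness}. I would then analyze cases according to which requirement is violated and how the legal-swap positions $\{i,j\}$ interact with $\{p,q,r\}$ and with the witness position.
\begin{itemize}
\item If (R1) fails, then $p \not\equiv r \pmod 2$, which is a condition on positions alone. When $\{p,q,r\} \cap \{i,j\} = \emptyset$, the values at $p, q, r$ are unchanged and the parity-mismatch witness persists in $v'$. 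When there is overlap, the legal-swap condition (R1') gives $i \equiv j \pmod 2$, so the parity pattern of the triple $\{p,q,r\}$ is preserved after the swap, and I would exhibit a substitute pattern $(p', q', r')$ in $v'$ with the same type $\pi$ and the same parity-mismatch witness.
\item If (R2) fails at some $x \in \{p+1,p+3,\ldots,r-1\}$ with $v(x) \geq m$, and $x \notin \{i,j\}$, then $x$ remains a witness for the same pattern in $v'$. If instead $x \in \{i,j\}$, the value at $x$ changes, and I would use the legal-swap conditions (R1')--(R3') applied to $(i,j)$ --- together with the fact that the swapped values $v(i), v(j)$ straddle a range of small intermediate values --- to either produce a replacement witness in $\{p+1,p+3,\ldots,r-1\}$ or relocate the illegal pattern to a new triple $(p', q', r')$ in $v'$.
\item If (R3) fails, the analysis is symmetric to (R2), using (R3') for the legal swap in place of (R2').
\end{itemize}

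The main obstacle I anticipate is the overlap subcase in (R2) (and symmetrically (R3)), where the legal swap moves precisely the value that witnesses the illegality. Here one must argue that the rigidity of a legal swap --- forcing $i \equiv j \pmod 2$, small values between them in the right positions, and no value of $[m',M']$ at odd-parity positions beyond $j$ --- is incompatible with simultaneously healing every witness of (R2) without introducing a fresh illegal pattern of type $\pi$ at new positions in $v'$. Formalizing this incompatibility, likely by tracking the displaced value $v(i)$ or $v(j)$ through its new location and applying Theorem~\ref{thm:legal swaps} to a carefully chosen candidate triple, is where the bulk of the work should lie.
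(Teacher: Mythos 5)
Your reduction via Corollary~\ref{cor:connected graph} to a single legal edge, and your organization of the analysis around which of (R1)--(R3) fails for the illegal pattern and how the swapped positions meet $\{p,q,r\}$ and the witness, is exactly the skeleton of the paper's proof. The problem is that the proposal stops at the skeleton: every case that carries actual content is left as ``I would exhibit a substitute pattern,'' ``I would \ldots either produce a replacement witness or relocate the illegal pattern,'' and you explicitly defer the main obstacle with ``formalizing this incompatibility \ldots is where the bulk of the work should lie.'' That deferred work \emph{is} the theorem; as written, nothing has been proved beyond the (easy) non-overlapping cases.

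For comparison, here is how the paper discharges the cases you leave open. If the legal swap moves the middle point $h$ of the pattern, then (R2) for the \emph{legal} swap forces $v(h')<v(h)$ for some $h'\in\{h\pm 1\}$, and the triple $(p,h',r)$ is again a pattern of type $\pi$ whose illegality witness (parity of $p,r$, or an interior/exterior value) is untouched. If an interior witness $x$ with $v(x)\ge m$ is moved, the swap must be $v\,(h\,x)$, so $\ol{v}_w(h)>m$ and legality forces $h$ into the same parity class $\{p+1,p+3,\ldots,r-1\}$, so $h$ becomes the new witness; the (R3) witness case is handled the same way. If an endpoint $p$ moves to $p'$, legality gives $p\equiv p'\pmod 2$, so the parity mismatch with $r$ survives for at least one of $p,p'$, with the single exceptional configuration $p<h<p'$, $v(p')<v(h)$ repaired by taking $p+1$ as the new middle (legality forces $v(p+1)<\min\{v(p),v(p')\}$). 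Each of these is a short, explicit construction of a new illegal triple; your proposal correctly identifies where they are needed but supplies none of them, so there is a genuine gap.
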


\begin{proof}
  Set $D := \odd(v)$. We prove this result recursively, showing that
  if $v$ has this property, then every neighbor $\ol{v}_w$ of $v$ in
  the graph $G_{D,n}$ defined above also has this property. By
  Corollary~\ref{cor:connected graph}, this will prove the result.

  Let $1 \leq i < h < j \leq n$ be the positions of the illegal
  pattern $\pi$ in $v$. Set $m := \min\{v(i),v(j)\}$ and
  $M := \max\{v(i),v(j)\}$.

  We first check whether there is a $\ol{v}_w$ with no illegal
  patterns of type $\pi$, in which $v(i) = \ol{v}_w(i)$ and
  $v(j) = \ol{v}_w(j)$. Suppose that there is such a $\ol{v}_w$.  Note
  that if $\ol{v}_w\swap v$ legally moved $(h,v(h))$, then for at
  least one of $h' \in \{h\pm1\}$, we would have $v(h') < v(h)$ by
  (R2). Define the position
$$h^* := v^{-1}\Big(\min\big\{v(h-1),v(h),v(h+1)\big\}\Big),$$
and the pattern
$$
\pi ' := \Big\{ \big(i,\ol{v}_w(i)\big), \big(h^*,\ol{v}_w(h^*)\big), \big(j,\ol{v}_w(j)\big)\Big\}
$$
in $\ol{v}_w$. If $\pi$ is illegal because $i \not\equiv j \pmod{2}$,
then $\pi'$ is illegal in $\ol{v}_w$. We may therefore assume that
$i \equiv j \pmod{2}$.
Now suppose that there exists $x \in \{ i+1,i+3, \ldots , j-1 \}$ such
that $v(x)>m$. If $\ol{v}_w(x) = v(x)$, then $\pi'$ is illegal in
$\ol{v}_w$. Otherwise $\ol{v}_w = v (x \, z)$ for some
$z \not\in \{i,j,x\}$ with $v(z) < m$. But then either $(i,v(i))$ or
$(j,v(j))$ would have violated (R2) for the supposedly legal
$v \swap v(x\,z)$. Therefore we can assume that there is no such $x$.
Finally, suppose that there exists $y \in \{j+1,j+3,\ldots\}$ such
that $v(y) \in [m,M]$. If $\ol{v}_w(y) = v(y)$, then $\pi'$ is illegal
in $\ol{v}_w$. Otherwise, $\ol{v}_w = v (y \, z)$ for some
$z \not\in \{i,j,y\}$ with $v(z) \not\in [m,M]$. To be legal, we must
have $z \equiv y \pmod{2}$. If $z > j$, then $\pi'$ will be illegal
due to $(z,\ol{v}_w(z))$. On the other hand, if $z < j$, then
multiplying by $(y \, z)$ would have failed (R2) because of the point
$(v^{-1}(M),M)$, and hence would not have been a legal move.
Therefore there is no such $\ol{v}_w$.

Now consider $\ol{v}_w$ in which, without loss of generality,
$\ol{v}_w = v(i \, i')$ for some $i' \neq i$, and in particular
$i \neq h' \neq j$ because $v(h') < m$. Note that $i' \neq j$ because
the move $\ol{v}_w \leftrightarrow v$ is legal; on the other hand, the
case of $i'$ equalling $h$ is not excluded.

Suppose that (R1) is not met in $v$. To fix this, the point at height
$i$ must move legally to form $\ol{v}_w$. By (R1), $i$ and $i'$ have
the same parity.  If $i = h'$, and $t$ is the position of the middle
value in the (necessarily) $312$-pattern swapped to form $\ol{v}_w$,
then
$$\Big\{ \big(i,\ol{v}_w(i)\big), \big(t,\ol{v}_w(t)\big), \big(j,\ol{v}_w(j)\big)\Big\}$$
will be an illegal pattern in $\ol{v}_w$ failing (R2), so we can assume $i' \neq h$. 
Thus, in all but one case, we have that
$$\Big\{ \big(I,\ol{v}_w(I)\big), \big(h,\ol{v}_w(h)\big), \big(j,\ol{v}_w(j)\big)\Big\}$$
is an illegal pattern of type $\pi$ in $\ol{v}_w$, for at least one
$I \in \{i,i'\}$, failing (R1). The only case where this might not
hold is when $i < h < i'$ and $v(i') < v(h)$. For $v \swap \ol{v}_w$
to have been legal, we must have had $v(i+1) < \min\{v(i),v(i')\}$,
and thus
$$\Big\{ \big(i,\ol{v}_w(i)\big), \big(i+1,\ol{v}_w(i+1)\big), \big(j,\ol{v}_w(j)\big)\Big\}$$
is an illegal pattern of type $\pi$ in $\ol{v}_w$, again failing
(R1). Thus we may assume that (R1) is satisfied, and hence
$i' \equiv i \equiv j \pmod 2$.

Now suppose that (R2) is not met in $v$. Let
$x \in \{i+1,i+3,\ldots, j-1\}$ be maximal for which $v(x) \ge m$. To
fix this in $\ol{v}_w$, the point at height $i$ would have to move
legally downward, using $i' \ge x$. In fact, because $i$ and $i'$ have
the same parity, we must have $x \in \{i+1, i+3, \ldots, i' - 1\}$,
and $v(x) < \min\{v(i),v(i')\}$ to make this a legal move not
violating (R2). Therefore, it must be that $m = v(j)$, and
$$\left\{(i,\ol{v}_w(i)), (h,v(h)), (j,v(j))\right\}$$
is an illegal pattern of type $\pi$ in $\ol{v}_w$, again failing (R2)
with this value of $x$.

Finally, suppose that (R3) is not met in $v$. In any legal Bruhat edge
from $v$, we find that
$$\left\{(i,\ol{v}_w(i)), (j-1,v(j-1)), (j,v(j))\right\}$$
is an illegal pattern of type $\pi$ in $\ol{v}_w$, again failing (R3).

The case $\ol{v}_w = v(j \, j')$ can be addressed by the same arguments.
\end{proof}

\medskip

%%%%%%%%%%%%%%%%%%%%%%%%%%%%%%%%%%%%%%%%%%%

\section{Odd diagram classes  are  Bruhat intervals}\label{sec:bruhat}

%%%%%%%%%%%%%%%%%%%%%%%%%%%%%%%%%%%%%%%%%%%

This section is devoted to proving the following result.

 \begin{theorem}\label{thm:bruhat}
   Let $D\subset [n]^2$ be an odd diagram. Then  $\Perm_n(D)=[u,v]$ for some $u,v\in \symm_n$.
 \end{theorem}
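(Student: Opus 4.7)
The plan is to identify $u$ and $v$ as the unique Bruhat-minimum and Bruhat-maximum of $\Perm_n(D)$, and then to prove $[u,v]=\Perm_n(D)$. For uniqueness of the Bruhat-minimum, I argue by contradiction. Suppose $u_1,u_2\in\Perm_n(D)$ are two distinct Bruhat-minimal elements. By Theorem~\ref{thm:equal odd means patterns}, without loss of generality $u_1$ contains a $213$-pattern and $u_2$ contains a $312$-pattern at the common positions $(b,c-1,c)$ identified in the proof of that theorem. Theorem~\ref{thm:intermediaries} then produces $\ol{(u_2)}_{u_1}=u_2\,(b\,c)\in\Perm_n(D)$, and (\ref{eqn:intermediate lengths}) gives $\ell(\ol{(u_2)}_{u_1})<\ell(u_2)$. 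Since the two permutations differ by a single transposition, this yields $\ol{(u_2)}_{u_1}<u_2$ in Bruhat order, contradicting the minimality of $u_2$. The uniqueness of the Bruhat-maximum is entirely symmetric. By Corollary~\ref{cor:length conjecture}, these unique extremes $u$ and $v$ are the $312$-avoiding and $213$-avoiding elements of $\Perm_n(D)$, respectively, so the inclusion $\Perm_n(D)\subseteq[u,v]$ is immediate.

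The substantive content is the reverse inclusion $[u,v]\subseteq\Perm_n(D)$. My plan is first to build a saturated chain $u=x_0\lessdot x_1\lessdot\cdots\lessdot x_d=v$ lying entirely in $\Perm_n(D)$. Starting at $x_0=u$, whenever $x_i\neq v$, Theorem~\ref{thm:intermediaries} applied to $x_i$ and $v$ produces $\ol{(x_i)}_v\in\Perm_n(D)$ with $\ell(\ol{(x_i)}_v)>\ell(x_i)$. If $x_i\lessdot\ol{(x_i)}_v$ is already a cover, I set $x_{i+1}=\ol{(x_i)}_v$; otherwise I plan to refine the transposition $(b_i\,c_i)$ realizing this move to a legal Bruhat cover $x_i\lessdot x_{i+1}$ with $x_{i+1}\le\ol{(x_i)}_v$, by analyzing how conditions (R1)--(R3) of Theorem~\ref{thm:legal swaps} interact with the intermediate values and extracting a cover which is itself a legal swap.

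The principal obstacle is then to conclude $[u,v]\subseteq\Perm_n(D)$, not merely that one such chain lies in the class. My plan is to strengthen the chain construction so that, for any fixed $w\in[u,v]$, the chain can be routed through $w$: while $x_i<w$, the upward legal cover selected must also satisfy $x_{i+1}\le w$, and once $w$ is reached the construction continues from $w$ up to $v$. Theorem~\ref{thm:illegal pattern swaps persist} is the key lever for this compatibility: if some $w\in[u,v]$ had $\odd(w)\neq D$, then its class $\Perm_n(\odd(w))$ would carry a persistent illegal pattern structure distinct from that of $u$ and $v$, and the pattern analysis underlying Theorem~\ref{thm:legal swaps} should force this structure to conflict with the Bruhat constraints $u\le w$ and $w\le v$. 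Making the compatibility of the cover choice with the auxiliary Bruhat bound precise, via a fine interplay of (R1)--(R3) and the covering characterization from Section~\ref{sec:notation}, is where I expect the main technical difficulty to lie.
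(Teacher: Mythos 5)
Your first half is sound: deducing the uniqueness of the Bruhat-minimal element (and dually the maximal one) from Theorem~\ref{thm:equal odd means patterns}, Theorem~\ref{thm:intermediaries} and \eqref{eqn:intermediate lengths} is a legitimate alternative to the paper's explicit column-by-column constructions (Theorems~\ref{thm:bottomchain} and~\ref{thm:topchain}), and it immediately yields $\Perm_n(D)\subseteq[u,v]$. One aside: your claim that $u$ and $v$ are \emph{the} $312$- and $213$-avoiding elements of the class is false --- for $n=4$ there are $17$ classes but only $C_4=14$ permutations avoiding $312$, so some classes contain no $312$-avoider; Corollary~\ref{cor:length conjecture} is only a conditional statement. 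This does not affect your inclusion, which needs only minimality and maximality. Your plan to refine the legal move $x_i\swap\ol{(x_i)}_v$ to a legal Bruhat cover is exactly the content of the paper's Lemma~\ref{lem:legalcover}; you only sketch it, but the sketch points in the right direction.

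The genuine gap is the reverse inclusion $[u,v]\subseteq\Perm_n(D)$. Your argument rests on the unproved claim that for every $w\in[u,v]$ and every $x_i\in\Perm_n(D)$ with $x_i<w$ there is a \emph{legal} cover $x_{i+1}$ of $x_i$ with $x_{i+1}\le w$; nothing in Sections~\ref{sec:legal moves}--\ref{sec:length} delivers this, and Theorem~\ref{thm:illegal pattern swaps persist}, your proposed ``key lever,'' concerns the persistence of illegal patterns \emph{within} a single class and says nothing about how a hypothetical $w$ with $\odd(w)\ne D$ sits in Bruhat order between $u$ and $v$; it plays no role in the paper's proof. The paper closes this gap with two ingredients you do not have: Proposition~\ref{pro:square}, a diamond lemma showing that any saturated chain $x\lcov y\lcov z$ inside $\Perm_n(D)$ forces the whole rank-two interval $[x,z]$ into $\Perm_n(D)$ (a case analysis on how the two transpositions interact with (R1)--(R3)); and Proposition~\ref{pro:flipconn}, the fact that any two maximal chains of a Bruhat interval are connected by flips, proved via Dyer's unique lexicographically-first increasing chain with respect to a reflection order. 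Starting from the single in-class maximal chain of Proposition~\ref{pro:maxchain}, flip-connectivity plus the diamond lemma propagate membership in $\Perm_n(D)$ to every maximal chain of $[u,v]$, hence to every element. Without something equivalent to Proposition~\ref{pro:square}, your routing argument cannot get off the ground.
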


 We prove Theorem~\ref{thm:bruhat} in three main steps. The first is
 to show that given an odd diagram class, there exist one
 Bruhat-minimal and one Bruhat-maximal element in the class. Next, we
 will show that the class contains a maximal chain between those two
 extreme elements. Finally, we will use the ``flip'' operation of
 \cite{BilBre} to complete the argument.

 For the rest of the paper, assume that $|\perm_n(D)| \ge 2$. We start
 by showing that within a class, each value $k \in [n]$ can only
 appear in positions having the same parity.
 
\begin{lemma}\label{lem:basic}
  Let $u,v\in \symm_n$ with $u\sim v$. Then
  $u^{-1}(k)\equiv v^{-1}(k)\pmod 2$ for all $k\in [1,n]$.
    \end{lemma}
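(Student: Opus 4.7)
The plan is to reduce the statement, via the connectivity of the class graph, to the case of a single legal Bruhat edge, and then read off the conclusion directly from requirement (R1) of Theorem~\ref{thm:legal swaps}.

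First, I would observe that the conclusion has a transitive flavor: if $u^{-1}(k) \equiv (u')^{-1}(k) \pmod 2$ for all $k$, and $(u')^{-1}(k) \equiv v^{-1}(k) \pmod 2$ for all $k$, then $u^{-1}(k) \equiv v^{-1}(k) \pmod 2$ for all $k$. By Corollary~\ref{cor:connected graph}, the class graph $G_{D,n}$ is connected with $D := \odd(u) = \odd(v)$, so $u$ and $v$ are joined by a finite path in $G_{D,n}$. Thus it suffices to verify the claim for two permutations that are adjacent in $G_{D,n}$.

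Next, by Definition~\ref{defn:intermediate permutations} and Theorem~\ref{thm:intermediaries}, any edge of $G_{D,n}$ is realized by a legal Bruhat edge $v = u\,(i\,j)$ with $1 \le i < j \le n$. Requirement (R1) of Theorem~\ref{thm:legal swaps} says precisely that $i \equiv j \pmod 2$. Now, for any value $k \notin \{u(i), u(j)\}$, the transposition $(i\,j)$ fixes the position of $k$, so $u^{-1}(k) = v^{-1}(k)$. For $k = u(i)$ we have $u^{-1}(k) = i$ and $v^{-1}(k) = j$, while for $k = u(j)$ we have $u^{-1}(k) = j$ and $v^{-1}(k) = i$; in both cases the two positions agree mod $2$ by (R1). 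This gives $u^{-1}(k) \equiv v^{-1}(k) \pmod 2$ for every $k \in [n]$.

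I do not anticipate any real obstacle: the substantive content of the lemma is already packaged in the parity condition (R1) of Theorem~\ref{thm:legal swaps} together with the connectivity statement of Corollary~\ref{cor:connected graph}. The argument is essentially a bookkeeping combination of these two facts.
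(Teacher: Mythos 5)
Your proposal is correct and follows essentially the same route as the paper's proof: reduce to a single edge of the class graph via Corollary~\ref{cor:connected graph}, identify that edge with a legal Bruhat edge, and invoke the parity condition (R1) of Theorem~\ref{thm:legal swaps}. You simply spell out the transitivity and the case analysis on $k$ that the paper leaves implicit.
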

    
    \begin{proof} 
      Set $D:=D_o(u)=D_o(v)$. Since $G_{D,n}$ is connected by
      Corollary~\ref{cor:connected graph}, we may assume that $u$ and
      $v$ are connected by an edge. Hence there is $w\in\Perm_n(D)$,
      $w\neq v$ such that $u=\overline{v}_w$. This means that
      $u=v(b\,c)$ for some $b,c \in [n]$, and $u \leftrightarrow v$ is
      a legal Bruhat edge. Hence, by Theorem \ref{thm:legal swaps},
      $b \equiv c \pmod{2}$.

Thus $u^{-1}(i)\equiv v^{-1}(i)\pmod 2$ for all $i\in [n]$, as desired.
 \end{proof}
 
 Note that the lemma implies, in particular, that we can talk about
 ``admissible parity'' of a column of the graph of a permutation in a
 class.

\begin{definition}\label{defn:admissible parity}
  Fix an odd diagram $D$. Let the $k$th column be labeled
  $\varepsilon_k \in \{0,1\}$ according to the parity of $w^{-1}(k)$
  for some (every, by Lemma~\ref{lem:basic}) permutation
  $w \in \perm_n(D)$. An arbitrary permutation $v$ has
  \emph{admissible parity} if $v^{-1}(k) \equiv \varepsilon_k \pmod 2$
  for all $k$.
\end{definition}
    
We exploit this notion in the next theorem to construct, starting from
any permutation, the unique minimal element of its odd diagram
class. The idea is to define a ``smallest'' permutation with the given
odd diagram.
    
\begin{theorem}\label{thm:bottomchain}
  Fix an odd diagram $D$. There exists $u \in \perm_n(D)$ such that,
  for all $w \in \perm_n(D)$, $u \le w$ in Bruhat order.
\end{theorem}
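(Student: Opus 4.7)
The plan is to prove the stronger statement that $\perm_n(D)$ has a \emph{unique} Bruhat-minimal element; in a finite poset, a unique minimal element is automatically the minimum, which then gives the theorem. The argument is a short contradiction based on the intermediaries machinery developed in Section~\ref{sec:legal moves}, and does not need to exhibit $u$ constructively.

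I would suppose, toward a contradiction, that $u_1 \neq u_2$ are both Bruhat-minimal elements of $\perm_n(D)$, and try to produce an element of $\perm_n(D)$ strictly below $u_2$. Applying Theorem~\ref{thm:equal odd means patterns} to the pair $(u_1, u_2)$ yields, without loss of generality, a $213$-pattern in $u_1$ and a $312$-pattern in $u_2$ at the positions $(b, c-1, c)$ of first disagreement, with $b < c$, in the notation of the proof of that theorem. Theorem~\ref{thm:intermediaries} then supplies the intermediary $\overline{u_2}_{u_1} = u_2 \cdot (b\,c)$ of Definition~\ref{defn:intermediate permutations} with $\overline{u_2}_{u_1} \sim u_2$, so $\overline{u_2}_{u_1} \in \perm_n(D)$. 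By~\eqref{eqn:intermediate lengths}, $\ell(\overline{u_2}_{u_1}) < \ell(u_2)$. Since $\overline{u_2}_{u_1}$ and $u_2$ differ by right multiplication by the reflection $(b\,c)$ and the length strictly decreases, they form a directed edge in the Bruhat graph, whence $\overline{u_2}_{u_1} < u_2$ in Bruhat order. This contradicts the Bruhat-minimality of $u_2$, so $u_1 = u_2$.

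\textbf{Main obstacle.} The heavy lifting---turning the existence of two distinct class members into a length-decreasing move within $\perm_n(D)$---has already been packaged into Theorems~\ref{thm:equal odd means patterns} and~\ref{thm:intermediaries}, so no substantial obstacle remains. The one fine point is converting the length drop from~\eqref{eqn:intermediate lengths} into a genuine Bruhat inequality, which is immediate from the paper's definition of $B(\symm_n)$ and the fact that Bruhat order is its transitive closure. The notion of admissible parity just introduced (Definition~\ref{defn:admissible parity}), though suggestive of a constructive column-by-column description of $u$, is not required here; any such constructive route would additionally have to verify both $\odd(u) = D$ and the minimality of $u$, both of which this contradiction argument sidesteps.
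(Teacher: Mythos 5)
Your proof is correct, but it takes a genuinely different route from the paper's. You argue non-constructively: two distinct minimal elements $u_1\sim u_2$ would, by Theorem~\ref{thm:equal odd means patterns}, force (after relabelling) a $312$-pattern in $u_2$, and then Theorem~\ref{thm:intermediaries} together with~\eqref{eqn:intermediate lengths} produces $\ol{u_2}_{u_1}=u_2(b\,c)\in\perm_n(D)$ with $\ol{u_2}_{u_1}<u_2$ in Bruhat order, contradicting minimality; since a finite poset with a unique minimal element has that element as its minimum, the theorem follows. All the ingredients you invoke are sound: the WLOG is legitimate because $u_1$ and $u_2$ play symmetric roles, the length drop in~\eqref{eqn:intermediate lengths} does convert to a Bruhat graph edge (as $u_2\,\ol{u_2}_{u_1}^{-1}$ is a conjugate of $(b\,c)$, hence a reflection), and $\perm_n(D)$ is finite and nonempty. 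The paper instead gives a constructive proof: it builds $u$ greedily, column by column, by placing each dot in the lowest position of admissible parity compatible with $D$, and shows that from an arbitrary $w\in\perm_n(D)$ one can descend to this $u$ through a sequence $w=w_1\geq w_2\geq\cdots\geq w_n=u$ of legal transpositions, verifying (R1)--(R3) of Theorem~\ref{thm:legal swaps} at each step. What the paper's approach buys is an explicit description of the minimum (used in the worked example, and mirrored verbatim in Theorem~\ref{thm:topchain} for the maximum); what yours buys is brevity, since it reuses the intermediary machinery wholesale and sidesteps verifying that the greedy construction lands in $\perm_n(D)$. Your argument dualizes immediately to Theorem~\ref{thm:topchain} as well.
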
   
\begin{proof}
  Based on the odd diagram $D$, we will construct the permutation $u$
  by placing dots (points in the graph of $u$) in each column, from
  left to right. For each new column, we place a dot in the highest
  empty cell which does not have a star below it or to the right, and
  which has no dots already placed to its left, and which has
  admissible parity.

  Consider an arbitrary $w \in \perm_n(D)$.  In order to show that the
  desired $u$ exists, we will construct a sequence of permutations
  $w = w_1, w_2,\dots,w_n$ such that
  \begin{enumerate}
  \item[(i)] $w_{k+1}\leq w_k$ in Bruhat order,
  \item[(ii)] $w^{-1}_{k+1}(i)=w^{-1}_{k}(i)$ for all $i$, except for
    (at most) two values of $i\in[k+1,n]$, and
  \item[(iii)] $w_{k}\sim w_{k+1}$.
  \end{enumerate}
  Developing $w_2, w_3, \ldots, w_n$ will be based on the data of $D$,
  not of $w$. The only feature of $w$ which could be considered
  relevant is $w^{-1}(1)$, but Lemma~\ref{lem:leftmost column} means
  that this is forced by $D$ itself.
  
  Consider $k\ge1$ and assume $w_1,\ldots,w_k$ have already been
  defined.  Set $i_{k+1}=\max\{i\in[n]:(i,k+1)\in D\}$ if the latter
  set is non-empty, and $i_{k+1}=0$ otherwise. Let $\mathcal A_{k+1}$
  be the set of $r\in [n]$ such that:
  \begin{itemize}
    \item $r>i_{k+1}$,
  \item $r\equiv \varepsilon_{k+1}\pmod 2$,
  \item $(r,s)\notin D$ if $k+1<s\leq n$, and
  \item $r\notin \{w_k^{-1}(i) : i\in[k]\}$.
  \end{itemize}
  Informally, $\mathcal A_{k+1}$ is the set of admissible positions
  for an element in column $k+1$ of the graph of a permutation
  equivalent to $w_k$, and for which the values $1,\dots, k$ have the
  same positions that they had in $w_k$. In particular,
  $w_k^{-1}(k+1)\in\mathcal A_{k+1}$, so this set is non-empty.

  Let $b=\min \mathcal A_{k+1}$. If $b=w_k^{-1}(k+1)$ then
  $w_{k+1}:=w_k$.  Otherwise, set $c:=w_k^{-1}(k+1)$. By definition,
  $b<c$ and $b\equiv c \pmod 2$. We set $w_{k+1}:=w_k(b\,c)$. It is
  clear that $w_{k+1}$ satisfies properties (i) and (ii). To show that
  $w_{k+1}\sim w_{k}$ it is enough to show that $(b\,c)$ is a legal
  transposition for $w_k$, using Theorem~\ref{thm:legal swaps}.
  \begin{itemize}
  \item (R1) follows from the parity condition in $\mathcal A_{k+1}$. 
  \item (R2) translates to showing that $w_k(x)<w_k(c)$ for all
    $x\in\{b+1,b+3,\dots,c-1\}$. Indeed, if $w_k(x)>w_k(c)=k+1$ for
    such an $x$ then $(x,k+1)\in D$, which would contradict the
    maximality of $i_{k+1}$.
  \item (R3) translates to showing that $w_k(y)\notin[w_k(c),w_k(b)]$
    for all $y\in\{c+1,c+3\dots\}$. If, instead,
    $w_k(y)\in[w_k(c),w_k(b)]$ then $(b,w_k(y))\in D$, which would
    contradict $b\in\mathcal A_{k+1}$.
    \end{itemize}
Therefore $w_{k+1}\sim w_k$.

The last permutation of the sequence $u:=w_n$ is the minimal element
of $\perm_n(D)$. \end{proof}

An analogue of the above result can be used to construct the maximal
permutation in a class, as stated in the following.

\begin{theorem}\label{thm:topchain}
  Fix an odd diagram $D$. There exists $v \in \perm_n(D)$ such that,
  for all $w \in \perm_n(D)$, $v \ge w$ in Bruhat order.
\end{theorem}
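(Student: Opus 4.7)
The plan is to take $v\in\Perm_n(D)$ to be an element of maximum Bruhat length---which exists because $\Perm_n(D)$ is finite---and to prove that this $v$ is automatically the Bruhat-maximum of the class. This approach sidesteps an explicit column-by-column construction: the naive dualization of Theorem~\ref{thm:bottomchain} (taking $b=\max\mathcal A_{k+1}$ in place of the minimum) gets stuck at condition (R2) of Theorem~\ref{thm:legal swaps}, since the star $(x,k+1)\in D$ that was used to derive a contradiction in Theorem~\ref{thm:bottomchain} no longer lies in the diagram when $x>c$. Instead, I would lean on the pattern dichotomy of Theorem~\ref{thm:equal odd means patterns} together with the intermediaries of Definition~\ref{defn:intermediate permutations} and Theorem~\ref{thm:intermediaries}.

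First I would show that the maximum-length element of $\Perm_n(D)$ is unique. If $v,v'\in\Perm_n(D)$ both attained the maximum length, Theorem~\ref{thm:equal odd means patterns} applied to the pair $(v,v')$ would identify positions at which one of them---say $v$---contains a $213$-pattern while $v'$ contains the corresponding $312$-pattern. Theorem~\ref{thm:intermediaries} then yields $\ol v_{v'}\in\Perm_n(D)$, and from \eqref{eqn:intermediate lengths} we obtain $\ell(\ol v_{v'})>\ell(v)$, contradicting the maximality of $\ell(v)$.

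Next I would prove that this unique $v$ satisfies $w\le v$ for every $w\in\Perm_n(D)$. Fix $w\ne v$ and apply Theorem~\ref{thm:equal odd means patterns} to $(v,w)$: if $v$ carried the $213$-pattern, the same length contradiction as above would arise via $\ol v_w$, so $v$ must carry the $312$-pattern and $w$ the $213$-pattern. Then $\ol w_v\in\Perm_n(D)$ with $\ell(\ol w_v)>\ell(w)$ by \eqref{eqn:intermediate lengths}, and by Corollary~\ref{cor:stepping closer} the permutation $\ol w_v$ agrees with $v$ in strictly more positions than $w$ does. Iterating the replacement $w\leftarrow\ol w_v$ produces a strictly increasing chain $w=w^{(0)}<w^{(1)}<\cdots<w^{(m)}=v$ in $\Perm_n(D)$, terminating at $v$ once the agreement count reaches $n$; transitivity then gives $w\le v$. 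The only subtle point is to verify that at each stage $w^{(i)}$---and not $v$---carries the $213$-pattern in the dichotomy, but this is forced once and for all by the maximality of $\ell(v)$ combined with \eqref{eqn:intermediate lengths}, so the ``hard part'' is really just bookkeeping rather than a genuine obstacle. All the machinery needed is already in place from Sections~\ref{sec:injectivity} and~\ref{sec:legal moves}.
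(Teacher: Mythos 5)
Your proof is correct, and it takes a genuinely different route from the paper's. The paper constructs the maximum explicitly, column by column, by dualizing the greedy algorithm of Theorem~\ref{thm:bottomchain} (taking $b=\max\mathcal A_{k+1}$), whereas you take $v$ of maximal Coxeter length and use the intermediaries of Definition~\ref{defn:intermediate permutations} to climb from an arbitrary $w$ up to $v$ inside the class --- essentially the same mechanism the paper itself uses for Corollary~\ref{cor:length conjecture} and Lemma~\ref{lem:legalcover}. Every step you need is available: maximality of $\ell(v)$ forces each $w^{(i)}\neq v$, and not $v$, to carry the $213$-pattern of Theorem~\ref{thm:equal odd means patterns}; Theorem~\ref{thm:intermediaries} and \eqref{eqn:intermediate lengths} then put the intermediary in $\Perm_n(D)$ and make it Bruhat-larger (it differs from $w^{(i)}$ by one transposition and is longer); and Corollary~\ref{cor:stepping closer} gives strict progress in the number of initial values agreeing with $v$, so the chain terminates at $v$ after at most $n$ steps. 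What you give up is an explicit description of the top element; what you gain is robustness, and your suspicion about the dualization is in fact well founded. For $w=24513\in\symm_5$ one has $D:=\odd(w)=\{(1,1),(2,3),(3,1)\}$, and the recipe of Theorem~\ref{thm:topchain} at $k+1=2$ gives $i_2=0$, $\varepsilon_2=1$, $\mathcal A_2=\{1,3,5\}$, hence $b=5$ and $c=1$, producing $w_2=34512$ with $\odd(34512)=\{(1,1),(2,2),(3,1)\}\neq D$: condition (R2) fails at $x=2$, because the argument ``$w_k(x)>k+1$ forces $(x,k+1)\in D$'' requires $x<w_k^{-1}(k+1)$, which holds when $b<c$ but not when $b>c$. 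So your argument is not merely an alternative; as written, the paper's verification of (R2) for the top chain does not go through, and a proof along your lines (or a repaired admissibility condition that also inspects the rows between $c$ and $b$) is actually needed.
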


\begin{proof}
  The proof follows similar lines to those of
  Theorem~\ref{thm:bottomchain}. Here the idea is to construct the
  maximal element by choosing, in every column, the largest admissible
  position. Keeping notation as in Theorem~\ref{thm:bottomchain}, we
  define the sequence of permutations $w_1,\dots,w_n$ such that
  \begin{enumerate}
  \item[(i)] $w_{k+1}\geq w_k$ in Bruhat order,
  \item[(ii)] $w^{-1}_{k+1}(i)=w^{-1}_{k}(i)$ for all $i$, except for
    (at most) two values of $i\in[k+1,n]$, and
  \item[(iii)] $w_{k}\sim w_{k+1}$.
  \end{enumerate}
  At each step $w_{k+1}$ is defined as in
  Theorem~\ref{thm:bottomchain} except here we take
  $b=\max \mathcal A_{k+1}$. Arguing as in the previous theorem shows
  that $v:=w_n$ is the maximal element of $\perm_n(D)$.
  \end{proof}

We demonstrate these results with an example.

\begin{example}
  Let $w=7461325\in \symm_7$. The graph $G(w)$ and odd diagram
  $D=D_o(w)$ are depicted in
  Figure~\ref{fig:construction}(\textsc{a}).
  Figure~\ref{fig:construction}(\textsc{b}) illustrates a step in the
  construction of the minimal element of $\Perm_7(D)$. The shaded grey
  cells are the admissible positions in the third column
  ($\mathcal A_3$ in the proofs of Theorems~\ref{thm:bottomchain}
  and~\ref{thm:topchain}).  Figure~\ref{fig:construction}(\textsc{c})
  depicts the graphs of the minimal (5431627, whose graph is
  represented by $\circ$) and maximal (7461523, whose graph is
  represented by $\square$) elements in the class. $\Perm_7(D)$ is a
  Bruhat interval of size 18 and rank~5.
      \end{example}

      \begin{figure}[t]
        \begin{subfigure}[t]{0.32\textwidth}\centering
\begin{tikzpicture}[scale=.5]
  \draw[step=1.0,gray,very thin,xshift=-0.5cm,yshift=-0.5cm] (1,1) grid (8,8);
\begin{scope}
\foreach \x in {(7,7), (4,6), (6,5), (1,4), (3,3), (2,2), (5,1)}
{\fill \x circle (4pt);}
\foreach \x in {(1,7), (2,7), (4,7), (3,6), (1,5), (2,5), (2,3)}
   {\draw \x node {$*$};}
\end{scope}
\end{tikzpicture}%
\caption{$G(w)$ and $D_o(w)$}\label{fig:G and D}  
\end{subfigure}
\begin{subfigure}[t]{0.32\textwidth}\centering
\begin{tikzpicture}[scale=.5]
  \draw[step=1.0,gray,very thin,xshift=-0.5cm,yshift=-0.5cm] (1,1) grid (8,8);
\begin{scope}
\foreach \x in { (1,4),  (2,2)}
{\draw[fill=none] \x circle (4pt);}
\foreach \x in {(1,7), (2,7), (4,7), (3,6), (1,5), (2,5), (2,3)}
{\draw \x node {$*$};}
\foreach \x in {(3,5), (3,3), (3,1)}
 {\draw[draw=gray,very thin,fill=black!20!,xshift=-0.5cm,yshift=-0.5cm,]  \x rectangle ++(1,1);}
\end{scope}
\end{tikzpicture}%
\caption{Admissible positions}\label{fig:admissible3}
\end{subfigure}
\begin{subfigure}[t]{0.32\textwidth}\centering
\begin{tikzpicture}[scale=.5]
  \draw[step=1.0,gray,very thin,xshift=-0.5cm,yshift=-0.5cm] (1,1) grid (8,8);
\begin{scope}
\foreach \x in { (1,4),  (2,2), (3,5), (4,6), (5,7), (6,3), (7,1)}
{\draw[fill=none] \x circle (4pt);}
\foreach \x in {(1,7), (2,7), (4,7), (3,6), (1,5), (2,5), (2,3)}
{\draw \x node {$*$};}
\foreach \x in { (1,4),  (2,2), (3,1), (4,6), (5,3), (6,5), (7,7)}
 {\draw[draw=black,fill=none,xshift=-5pt,yshift=-5pt,]  \x rectangle ++(10pt,10pt);}
\end{scope}
\end{tikzpicture}
\caption{Min ($\circ$) and max ($\square$)}\label{fig:minmax}
\end{subfigure}
\caption{Building the minimal and maximal elements of
  $\Perm_7(D_o(7461325))$.}\label{fig:construction}
\end{figure}
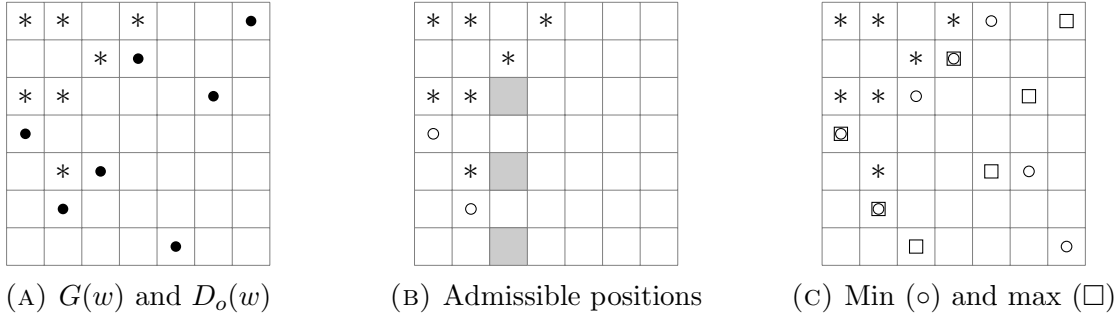

Theorems~\ref{thm:bottomchain} and \ref{thm:topchain} imply that the
minimal (``bottom'') and maximal (``top'') elements, $u$ and $v$, of
the class are unique and that $\Perm_n(D)\subseteq[u,v]$.

\vspace{.5em}
 
We now show that given an odd diagram class $\Perm_n(D)$ and its
bottom and top elements $u$ and $v$, there is a maximal chain (in
Bruhat order) from $u$ to $v$ of elements within the class. We start
by showing that given any permutation $w \neq v$, we can always find
an element of $\Perm_n(D)$ covering $w$. We will sometimes call such a
permutation a \emph{legal cover} of $w$.

 \begin{proposition}\label{pro:maxchain}
   Let $w\in \symm_n$ and $D=D_o(w)$. Let the bottom and top elements
   of $\Perm_n(D)$ be $u$ and $v$, respectively. Then there exists a
   maximal chain of elements in the class $\Perm_n(D)$ connecting $u$
   to $v$.
  \end{proposition}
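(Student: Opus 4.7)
The plan is to reduce Proposition~\ref{pro:maxchain} to the following single existence claim: for every $w \in \Perm_n(D)$ with $w \neq v$, there exists a Bruhat cover $w \lcov w'$ with $w' \in \Perm_n(D)$ (a \emph{legal cover} of $w$). Once this is shown, iterating starting from $u$ produces a sequence $u \lcov u_1 \lcov u_2 \lcov \cdots$ within $\Perm_n(D) \subseteq [u,v]$, whose lengths strictly increase and which must therefore terminate at the unique maximum $v$ of the class. This yields the desired maximal chain.

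To find a legal cover of a given $w \neq v$, I would run the construction from the proof of Theorem~\ref{thm:topchain} applied to $w$ and isolate its \emph{first} non-trivial step. If $k+1$ is the smallest value on which $w$ and $v$ disagree, then all prior steps of the construction are trivial, so at step $k$ we are in fact applying a legal transposition $(c,b)$ to $w$ itself, where $c = w^{-1}(k+1)$ and $b = \max \mathcal{A}_{k+1} = v^{-1}(k+1) > c$; this sends $w$ to $w(c,b) > w$ in Bruhat order. If $(c,b)$ happens to be a Bruhat cover, we take $w' := w(c,b)$. Otherwise, by (R2) of Theorem~\ref{thm:legal swaps}, any obstruction to the cover property must come from a position $x$ of the same parity as $c$ in $\{c+2, c+4, \ldots, b-2\}$ with $w(x) \in (k+1, w(b))$, and I would choose $x$ to be the smallest such position.

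The heart of the argument---and where I expect the main obstacle to lie---is verifying that the refined swap $(c,x)$ is both legal and a Bruhat cover. Legality of $(c,x)$ should follow from the legality of $(c,b)$: (R1) is immediate from the parity of $x$; (R2) is inherited because the interior of $(c,x)$ is contained in that of $(c,b)$; and (R3) uses the strict inequality $w(x) < w(b)$ to ensure $[k+1,w(x)] \subseteq [k+1,w(b)]$, while the positions in $(x,b)$ of parity opposite to $x$ are controlled by (R2) of the original swap. The cover property then follows from the minimality of $x$: any same-parity position strictly between $c$ and $x$ has $w$-value outside $(k+1,w(b))$ and hence outside the smaller interval $(k+1,w(x))$, while opposite-parity positions are again handled by (R2). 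Having produced a legal cover $w \lcov w(c,x)$ in $\Perm_n(D)$, the iterative argument of the first paragraph completes the proof.
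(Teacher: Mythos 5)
Your proposal is correct and follows essentially the same route as the paper: the paper isolates exactly your existence claim as Lemma~\ref{lem:legalcover}, obtains the initial legal transposition swapping $w^{-1}(k+1)$ with $v^{-1}(k+1)$ (via $\ol{w}_v$ from Theorem~\ref{thm:intermediaries} rather than via the top-chain construction, but it is the same transposition), and then refines it to a cover by choosing the topmost obstructing point exactly as you do, verifying (R1)--(R3) by the same containment arguments. The concluding iteration, using Theorem~\ref{thm:topchain} to keep the chain inside $[u,v]$, also matches the paper's proof of Proposition~\ref{pro:maxchain}.
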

Our proof will rely on the following.
\begin{lemma}\label{lem:legalcover}

  Let $v$ denote the top element of $\Perm_n(D)$ and assume $w\neq
  v$. Then there exists a transposition $t$ such that $w\lra wt$ is a
  legal Bruhat edge and $w\lcov wt$.
  \end{lemma}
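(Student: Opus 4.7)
The plan is to construct $t$ explicitly. Let $k+1$ be the smallest value with $w^{-1}(k+1) \ne v^{-1}(k+1)$; this is well-defined by Lemma~\ref{lem:leftmost column} together with $w \ne v$. Set $\beta := w^{-1}(k+1)$ and $\gamma := v^{-1}(k+1)$. I first argue $\beta < \gamma$. In the proof of Theorem~\ref{thm:equal odd means patterns}, the permutation whose preimage of $k+1$ is smaller carries the $213$-pattern, so if $\beta > \gamma$ then $v$ contains a $213$-pattern at $(\gamma, \beta-1, \beta)$; Theorem~\ref{thm:intermediaries} and \eqref{eqn:intermediate lengths} would then produce $\overline{v}_w = v\,(\gamma\,\beta) \in \Perm_n(D)$ with $\ell(\overline{v}_w) > \ell(v)$, contradicting the maximality of $v$ supplied by Theorem~\ref{thm:topchain}. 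Hence $\beta < \gamma$, $w$ has a $213$-pattern at $(\beta, \gamma-1, \gamma)$ with values $(k+1, d, w(\gamma))$ for some $d \le k$, and by Theorem~\ref{thm:intermediaries} the transposition $(\beta,\gamma)$ is legal for $w$ with $\ell(w(\beta,\gamma)) > \ell(w)$.

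If $w \lcov w(\beta,\gamma)$, I take $t := (\beta,\gamma)$ and we are done. Otherwise the cover criterion fails, so there exists $z \in (\beta, \gamma)$ with $k+1 < w(z) < w(\gamma)$. Condition (R2) of Theorem~\ref{thm:legal swaps} applied to the legal swap $(\beta,\gamma)$ forces any such $z$ to have the same parity as $\beta$, since opposite-parity positions in this range satisfy $w(z) < k+1$. Among all such $z$, let $z_1$ be one minimizing $w(z_1)$, and set $t := (\beta, z_1)$.

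To finish, I verify that $t = (\beta, z_1)$ gives a legal cover of $w$. Legality is checked via Theorem~\ref{thm:legal swaps}: (R1) holds by construction; (R2) for $(\beta, z_1)$ is inherited from (R2) for $(\beta, \gamma)$ since $(\beta, z_1) \subset (\beta, \gamma)$; for (R3), any $y \in \{z_1+1, z_1+3, \dots\}$ either lies in $(z_1, \gamma)$, in which case $w(y) < k+1$ by (R2) for $(\beta, \gamma)$, or lies in $\{\gamma+1, \gamma+3, \dots\}$, in which case $w(y) \notin [k+1, w(\gamma)] \supseteq [k+1, w(z_1)]$ by (R3) for $(\beta, \gamma)$ together with $w(z_1) < w(\gamma)$. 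The cover condition requires no $z' \in (\beta, z_1)$ with $w(z') \in (k+1, w(z_1))$: opposite-parity $z'$ are ruled out by (R2), while a same-parity such $z'$ would belong to the set from which $z_1$ was extracted and have strictly smaller $w$-value, contradicting minimality. The main obstacle is precisely preserving (R3) upon passing from $(\beta,\gamma)$ to $(\beta,z_1)$; the strict inequality $w(z_1) < w(\gamma)$, built into the choice of $z_1$, is what guarantees the interval inclusion that makes (R3) survive the shrinking.
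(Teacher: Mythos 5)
Your proof is correct and follows essentially the same route as the paper: obtain the legal, length-increasing transposition $(\beta,\gamma)$ from Theorem~\ref{thm:intermediaries}, and if it is not a cover, shrink it to $(\beta,z_1)$ for a suitably extremal point in the inversion rectangle, verifying (R1)--(R3) and the covering condition. The only (immaterial) difference is that you pick the rectangle point of minimal \emph{value} where the paper picks the one of minimal \emph{position}; both choices yield a legal cover.
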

  \begin{proof}
    Since $w\sim v$ and $w\neq v$, Definition~\ref{defn:intermediate
      permutations} and Theorem~\ref{thm:intermediaries} ensure that
    there exists $\ol{w}_v\sim w$ which is obtained from $w$ by
    applying a single (legal) transposition, say $r$. If $k$ is the
    minimum column index in which $w$ and $v$ differ, then $r=(b\,c)$,
    where $b=w^{-1}(k)$ and $c=v^{-1}(k)$. Since $(b\, c)$ is legal
    for $w$, $b\equiv c \pmod 2$. Since $v$ is the longest element of
    the class, $\ell(\ol{v}_w)<\ell(v)$ and it therefore follows from
    Remark~\ref{rmk:intermediate lengths} that $\ell(\ol{w}_v)>\ell(w)$ so $b<c$. If
    $\ell(\ol{w}_v)-\ell(w)=1$ we are done.

    If $\ell(\ol{w}_v)-\ell(w)>1$, that is
    $\ell(\ol{w}_v)-\ell(w)\geq 3$, then $w(c)=k+i$ for some
    $i\geq 2$, $c-b\geq 2$ and, by Proposition~\ref{pro:Bruhat cover},
    there exists $j\in[k+1,k+i-1]$ such that
    $d:=w^{-1}(j)\in [b+1,c-1]$. At the level of the graph $G(w)$ this
    means that the points $(b,w(b))$ and $(c,w(c))$ are sufficiently
    far away from each other and that there is at least one point of
    the graph inside the rectangle they determine.  Let
    $m:=\min\{d\in[n]:(d,w(d))\in[b+1,c-1]\times[k+1,k+i-1]\}$ so
    $(m,w(m))$ is the ``highest'' such dot. We claim that $t=(b\, m)$
    is a legal transposition for~$w$ such that $w\lcov wt$.  The
    transposition $(b\,c)$ is legal for $w$, so, by Theorem
    \ref{thm:legal swaps} $w(x)<w(b)$ for all
    $x\in\{b+1,b+3,\dots,c-1\}$. Since $w(b)<w(m)$ this implies that
    $m\equiv b\pmod 2$, showing that both (R1) and (R2) from
    Theorem~\ref{thm:legal swaps} hold. Finally, conditions (R2) and
    (R3) for $w\lra w(b\,c)$ imply that $w(y)<w(b)$ if
    $y \in \{ m+1,m+3, \ldots , c-1 \}$ and that
    $w(y) \notin [w(b),w(m)]$ if $y \in \{ c+1,c+3, \ldots \}$ so (R3)
    holds for $w\lra w(b\, m)$.

    Clearly, by minimality of $m$, the transposition $t$ is also such
    that $w\lcov wt$. This proves the result.\end{proof}

  \begin{proof}[Proof of Proposition~\ref{pro:maxchain}]
    Our result follows by repeated application of
    Lemma~\ref{lem:legalcover} noting that, since $wt \in \Perm_n(D)$,
    $wt \leq v$ by Theorem \ref{thm:topchain}.
  \end{proof}

  We now show that any saturated $3$-chain in an odd diagram class
  completes to a square in the same class.  Let $x, y, z \in \symm_n$
  be such that $x \lhd y \lhd z$. By~\cite[Lemma 2.7.3]{BB} the
  interval $[x, z]$ is isomorphic, as a poset, to a Boolean algebra of
  rank $2$. Thus, there exists a unique permutation $w \in \symm_n$
  such that $w \neq y$ and $x \lhd w \lhd z$. The proof of the
  following proposition relies on a case by case analysis that is
  based on an explicit description of $w$ that we now provide.

Let $y = x(a \, b)$ for some $a < b$ and $z = y(c \, d)$ for some $c < d$. We have
$x(a) < x(b)$ and $y(c) < y(d)$ and there are five cases:
\begin{itemize}
	\item $|\{a, b, c, d\}|=4$: In this case $w = x(c\,d) = z(a\,b)$.
	\item $a < b = c < d$: In this case we have $x(a) < x(d)$,
          $w = x(c\,d)$ if $x(b) < x(d)$ and $w = z(a\,b)$ if
          $x(b) > x(d)$.
	\item $a < b = d > c$: In this case we have $x(a) > x(c)$,
          $w = z(a \, b)$ if $c < a$ and $w = x(c\,d)$ if $a < c$.
	\item $b > a = c < d$: In this case we have $x(b) < x(d)$,
          $w = z(a\,b)$ if $d > b$ and $w = x(c\,d)$ if $d < b$.
	\item $b > a = d > c$: In this case we have $x(c) < x(b)$,
          $w = z(a\,b)$ if $x(c) > x(a)$ and $w = x(c\,d)$ if
          $x(c) < x(a)$.
\end{itemize}

  \begin{proposition}\label{pro:square}
    Let $D$ be an odd diagram, with $x\lcov y\lcov z$ all in
    $\Perm_n(D)$. Then $[x,z]\subseteq \Perm_n(D)$.
\end{proposition}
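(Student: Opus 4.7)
The plan is to use the diamond property of Bruhat order on $\symm_n$: any rank-$2$ Bruhat interval is a diamond, so $[x,z]=\{x,y,y',z\}$ for a unique fourth element $y'$. It therefore suffices to show that $\odd(y')=D$, and for this it is enough to exhibit legal Bruhat edges $x\leftrightarrow y'$ and $y'\leftrightarrow z$. Write the two given covers as $y=xt_1$ and $z=yt_2$ with $t_i=(b_i\,c_i)$, $b_i<c_i$. Theorem~\ref{thm:legal swaps} provides conditions (R1)--(R3) for $t_1$ acting on $x$ and for $t_2$ acting on $y$. The proof then splits on the overlap $|\{b_1,c_1\}\cap\{b_2,c_2\}|\in\{0,1\}$.

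In the commuting case (overlap $0$), one has $y'=xt_2=yt_1$. Since $x$ and $y$ agree off positions $\{b_1,c_1\}$, and (R1) for $t_1$ forces $b_1\equiv c_1\pmod 2$, either both or neither of $b_1,c_1$ lies in any given parity-restricted range appearing in (R2) or (R3) for $t_2$. Combined with the cover inequality $x(b_1)<x(c_1)$ and the known conditions for $t_2$ on $y$, this is enough to deduce (R1)--(R3) for $t_2$ on $x$; a symmetric argument handles $t_1$ acting on $y'$.

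In the non-commuting case (overlap $1$), after relabeling one may assume $t_1=(b\,c)$ and $t_2=(c\,d)$ with $b<c<d$; the remaining overlap configurations are handled by reversing the chain or by analogous relabelings. From (R1) for both transpositions, $b\equiv c\equiv d\pmod 2$. Writing $p_b:=x(b)$, $p_c:=x(c)$, $p_d:=x(d)$, the cover conditions give $p_b<p_c$ and $p_b<p_d$. A direct check then identifies $y'=x(c\,d)$ when $p_c<p_d$, and $y'=x(b\,d)$ when $p_c>p_d$. In either subcase, (R1) for the new transpositions is automatic from $b\equiv c\equiv d$. For (R2) and (R3), the key observation is that the parity-restricted ranges appearing in these conditions consist entirely of positions of parity \emph{opposite} to $b\equiv c\equiv d$, so the exceptional position $c$ (where $x$, $y$, and $y'$ may disagree) never appears in them. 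This makes it possible to combine the constraints from $t_1$ on $x$ and $t_2$ on $y$ to conclude (R2) and (R3) for the new transpositions on $x$ and on $y'$.

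The main obstacle is the bookkeeping in the non-commuting case, where a genuinely new pair of transpositions must be constructed and value-constraints at positions outside $\{b,c,d\}$ must be transferred from two distinct legal moves to two new ones. The parity observation above is what makes this transfer tractable; without it, the new legality conditions would need to control values at the middle position $c$, which is not directly constrained by either of the given legal moves.
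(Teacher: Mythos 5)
Your overall strategy is the same as the paper's: reduce to the unique fourth element $y'$ of the rank-$2$ diamond $[x,z]$, split on whether the two transpositions share a position, and transfer the legality conditions (R1)--(R3) of Theorem~\ref{thm:legal swaps} from the two given edges to the two new ones. Your treatment of the non-commuting case is sound and matches the paper's Case~2; in particular your observation that the (R2)/(R3) ranges consist of positions of parity opposite to $b\equiv c\equiv d$, so the disputed middle position never enters them, is exactly the mechanism that makes the paper's computation work, and your identification of $y'$ in the two subcases is correct.

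There is, however, a genuine gap in your commuting case. The claim that ``either both or neither of $b_1,c_1$ lies in any given parity-restricted range appearing in (R2) or (R3) for $t_2$'' is false when the two transpositions are nested or interlaced with $b_1\not\equiv b_2\pmod 2$: for instance, if $b_1<b_2<c_2<c_1$, then $c_1$ lies in the (R3) range $\{c_2+1,c_2+3,\ldots\}$ for $t_2$ while $b_1$ does not. In that configuration, knowing $y(c_1)=x(b_1)\notin[m_2,M_2]$ together with $x(b_1)<x(c_1)$ does \emph{not} yield $x(c_1)\notin[m_2,M_2]$; one must additionally invoke (R2) for $t_1$ on $x$ to get $x(b_2),x(c_2)<x(b_1)<x(c_1)$, whence $M_2<x(c_1)$. (The interlaced configuration is in fact impossible for the same reason -- the two (R2) conditions force contradictory inequalities -- but your sketch does not rule it out.) This is precisely where the paper's Case~1 does its extra work, deriving the chain $x(c)<x(d)<x(a)<x(b)$ for the nested subcase and showing the interlaced subcase cannot occur. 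The gap is repairable, but as written the justification for the commuting case does not go through in these configurations.
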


  \begin{proof}
    We keep the notation introduced before the statement of
    Proposition \ref{pro:square}. We will show that either $x \lra w$
    or $w \lra z$ is a legal Bruhat edge.  We need to consider several
    cases, depending on $|\{a,b,c,d\}|$, parities and the relative
    order of positions and values involved in the swaps.

    \vspace{2mm}
    
    \textbf{Case 1.} Suppose $|\{a,b,c,d\}|=4$, that is $(a\,b)$ and
    $(c\,d)$ commute. We claim that in this case $(c\,d)$ is a legal
    move for $x$ such that $w=x(c\,d)$ covers $x$.  Clearly, the
    parity condition (R1) from Theorem~\ref{thm:legal swaps} holds. By
    our assumption, $x(a)<x(b)$ and $x(c)<x(d)$. To show that
    $x\lra w$ is a legal move, we need to show that (R2) and (R3)
    hold.

\begin{itemize}
\item If $a\equiv c\pmod 2$ then $x$, $y$, $w$ and $z$ coincide in all
  positions $i\in[n]$ with $i\not\equiv a\pmod 2$. But these are the
  only values involved in the requirements for the legality of the
  relevant moves. So (R2) and (R3) for $y\lra z $ imply the analogous
  conditions for $x\lra w$ independently of the relative order of
  $a,b,c$ and $d$.

\item For $a\not\equiv c\pmod 2$, we will consider $a<c$ (all other
  cases work similarly).

    If $a<b<c<d$ then clearly (R2) and (R3) hold for $y \lra z $ if
    and only if they hold for  $x\lra w$, since positions and values
    involved in the swaps are the same.
    
    The case in which the two transpositions interlace, that is
    $a<c<b<d$, cannot occur. Indeed, for $(a\,b)$ to be legal for
    $x$, $x(c)<x(a)$ should hold, and for $(c\,d)$ to be legal for $y$, 
    $x(a)=y(b)<y(c)=x(c)$ should hold.

    Finally, suppose $a<c<d<b$. Our assumptions and condition (R2) for
    $x\lra y$ imply $x(c)<x(d)<x(a)<x(b)$. This in turn shows that
    both (R2) and (R3) hold for $x\lra w$ if and only if they hold
    for $y \lra z$. 
\end{itemize}
    
        \vspace{2mm}

    \textbf{Case 2.}  Suppose $|\{a,b,c,d\}|=3$. 
    The parity condition is clearly always satisfied in this case. 

\smallskip
   % We will show that our claim holds %in all cases with $a\leq c$
   % and $b\leq d$ (all other cases work similarly).
   Suppose $a<b=c<d$. %  (this configuration is depicted in Figure~\ref{fig:abcd}). 
    By assumption, we have $x(a)<x(b)$ and $x(a)<x(d)$ (since $x\leq y\leq z$).

%    \begin{figure}[htbp]
%  \begin{subfigure}[t]{0.45\textwidth}\centering
%\begin{tikzpicture}[scale=.5]
%\draw[step=1.0,gray,very thin,xshift=-0.5cm,yshift=-0.5cm] (.5,.5) grid (9.5,9.5);
%\begin{scope}
%\clip (0,0) rectangle (9,9);
%\draw[blue,thick] (6,7) -- (2,7) -- (2,1);
%\draw[red,thick] (6,7) -- (6,1) -- (2,1);
%\foreach \x in {(6,1),(6,7)}{\draw[thick] \x --++(10,0);}
%\foreach \x in {(2,1),(6,1)}{\draw[thick] \x --++(0,-10);}
%\foreach \x in {(2,7),(6,1)}{\fill[blue] \x circle (4pt);}
%\foreach \x in {(2,1),(6,7)} {\fill[red] \x circle (4pt);}
%\end{scope}
%\end{tikzpicture}
%\caption{ $ $}\label{fig:abd}  
%\end{subfigure}
%\begin{subfigure}[t]{0.45\textwidth}\centering
%\begin{tikzpicture}[scale=.5]
%\draw[step=1.0,gray,very thin,xshift=-0.5cm,yshift=-0.5cm] (.5,.5) grid (9.5,9.5);
%\begin{scope}
%\clip (0,0) rectangle (9,9);
%\draw[blue,thick] (6,7) -- (2,7) -- (2,1);
%\draw[red,thick] (6,7) -- (6,1) -- (2,1);
%\foreach \x in {(6,1),(6,7)}{\draw[thick] \x --++(10,0);}
%\foreach \x in {(2,1),(6,1)}{\draw[thick] \x --++(0,-10);}
%\foreach \x in {(2,7),(6,1)}{\fill[blue] \x circle (4pt);}
%\foreach \x in {(2,1),(6,7)} {\fill[red] \x circle (4pt);}
%\end{scope}
%\end{tikzpicture}
%\caption{$ $ }\label{fig:adb}
%\end{subfigure}
%\caption{If $x\lra x (a\,b) \lra x (a\,b) (b\, d)$ are legal swaps then $x\lcov x(b\, d)$ or $x\lcov x(a\, d)$ is a legal cover.}\label{fig:abcd}
%      
%     \end{figure}

     \begin{itemize}
       
     \item 
    If $x(a)<x(b)<x(d)$, %(see also Figure~\ref{fig:abcd}(\textsc{a}))
     then $w=x (c \, d)$ and (R2) and (R3)  for $y \lra z$  imply
    (R2)  and (R3) for $x\lra w$. This is because
    $\min\{x(a),x(d)\}=x(a)<x(b)=\min\{x(b),x(d)\}$ and $\max\{ x(b),x(d) \}=\max\{x(a),x(d)\}$. 
  \item 
    If $x(a)<x(d)<x(b)$ then $w=z(a\, b)$ and 
    we claim that $z$ is a legal cover of $w$, or, equivalently, that
    $x \lra x(a\,d)$ is legal.  We show the
    latter.  Condition (R2) for $x\lra y$ and for $y\lra z$ implies
    $x(i)<x(a)$ for all $i\in \{a+1,a+3,\dots,d-1\}$; that is, (R2)
    holds for $x\lra x(a\,d)$.  Similarly, condition~(R3) for $y\lra z$ 
    implies (R3) for  $x\lra x(a\,d)$. 
    \end{itemize}
 
Suppose now that  $a=c<b<d$. Then, since $x \leq y \leq z$, $x(a)<x(b)<x(d)$,
and $w=z(a \, b)$.
We claim that then $w \lra z$ is a legal move. Indeed, condition (R2) for 
 $z \lra z (a \, b)$ holds if and only if it holds for $x \lra y$. Furthermore,
 if $i \in \{ b+1, b+3, \ldots  \}$ then $z(i) \notin [x(a),x(b)]$ because 
 condition (R3) holds for $x \lra y$ and because $i \neq d$. While $z(i) \notin 
 [x(b)+1,x(d)]$ if $i < d$ because $y \lhd z$ and if $i>d$ because condition (R3) holds for 
 $y \lra z$. Hence condition (R3) holds for $z \lra z (a \, b)$.
 
Suppose now that  $a=c<d<b$. Then $x(a)<x(b)<x(d)$ and $w=x(c \, d)$. We claim that in this case
$x \lra w$ is a legal move. Indeed, condition (R2) for $x \lra x(c \, d)$
follows from condition (R2) for $x \lra y$. Let $i \in \{ d+1, d+3, \ldots  \}$,
so $i \neq b$. Then $x(i) \notin  [x(a),x(b)]$ if $i<b$ because $x \lhd y$ and if $i>b$ because 
condition (R3) holds for $x \lra y$. While $x(i) \notin 
[x(b)+1,x(d)]$ because condition (R3) holds for 
$y \lra z$. Hence condition (R3) holds for $x \lra w$.
 
 The proofs of the cases $a<c<b=d$, $c<a<b=d$ and $c<a=d<b$ are all 
 analogous, and are therefore omitted. 
 \qedhere
   
    \end{proof}

    \vspace{1em}

    Let $u,v \in \symm_n$ be such that $u \rightarrow v$ in the Bruhat
    graph, and set $\lambda(u,v):=v \, u^{-1} \in T$.  If
    $(x_0, \ldots , x_d) \in \symm_n^d$ is a saturated chain, then
    define
$$\lambda(x_0, \ldots , x_d):=(\lambda(x_0,x_1), \ldots ,\lambda(x_{d-1},x_d)) \in T^d.$$
Let $\preceq$ be the lexicographic order on $T$, so
$(1\,2) \preceq (1\,3) \preceq \cdots \preceq (1\,n) \preceq (2\,3)
\preceq \cdots \preceq (n-1\,n)$. We use the same notation for the
lexicographic order on $T^d$ for any $d \in {\mathbb N}$. Given two
saturated chains of the same length
${\mathcal C}_1, {\mathcal C}_2 \in \symm_n^d$, we write
${\mathcal C}_1 \preceq {\mathcal C}_2$ and say that ${\mathcal C}_1$
is {\em lexicographically smaller} than ${\mathcal C}_2$ if
$\lambda({\mathcal C}_1) \preceq \lambda({\mathcal C}_2)$. It is well
known, and easy to see (see, e.g., \cite[Chapter 5, Exercise 20]{BB}),
that $\preceq$ is a reflection order. (We refer the reader to, e.g.,
\cite[\S 5.2]{BB}, for the definition of and further information about
reflection orderings.) A saturated chain
$(x_0, \ldots , x_d) \in \symm_n^d$ is {\em increasing} if
$\lambda(x_0,x_1) \preceq \cdots \preceq \lambda(x_{d-1},x_d)$.

Let ${\mathcal C}:=(x_0, \ldots , x_d)$ be a saturated chain.  Let  $i
\in [d-1]$,  
and let $y_i \in \symm_n$ be the unique element such that $x_{i-1}
\lhd  y_i \lhd x_{i+1}$ and $y_i \neq x_i$.
Following \cite[\S 6]{BilBre} we define the {\em flip} of ${\mathcal
  C}$  at $i$ to be
\[
\flip_i({\mathcal C}):= (x_0, \ldots , x_{i-1},y_i,x_{i+1}, \ldots , x_d).
\]
Note that $\flip_i(\flip_i({\mathcal C}))={\mathcal C}$. 
The following result is essentially known. However, for lack of an
adequate  reference, and for
completeness, we include its proof here.
\begin{proposition}
	\label{pro:flipconn}
Let $u,v \in \symm_n$, $u \leq v$. Then any two maximal chains
  in $[u,v]$  are related by a sequence of flips.
\end{proposition}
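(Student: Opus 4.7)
The plan is to show that every maximal chain in $[u,v]$ can be reduced to a canonical chain by a sequence of flips; transitivity then gives the result. The canonical target will be the unique $\preceq$-increasing maximal chain $\mathcal C_{\min}$ of $[u,v]$, whose existence and uniqueness follow from the fact that $\preceq$ is a reflection ordering and hence yields an $EL$-labeling of Bruhat intervals in $\symm_n$ (see, e.g., \cite[\S 5.2]{BB}).

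The key ingredient is a standard property of reflection orderings restricted to rank-$2$ Bruhat intervals: given $[x,z]$ with $\ell(z)-\ell(x)=2$ and its two maximal chains $(x,y_1,z)$ and $(x,y_2,z)$, exactly one of these chains is $\preceq$-increasing while the other is $\preceq$-decreasing, and the label sequence of the increasing chain is strictly lex-smaller than that of the decreasing one. This follows because the reflections of any rank-$2$ dihedral reflection subgroup are totally $\preceq$-ordered in a manner compatible with the dihedral structure, and the two maximal chains of the rank-$2$ interval traverse these reflections in opposite directions. Given any maximal chain $\mathcal C = (x_0,\ldots,x_d)$ in $[u,v]$ with label sequence $\lambda(\mathcal C)=(t_1,\ldots,t_d)$, I call position $i$ a \emph{descent} of $\mathcal C$ if $t_i \succ t_{i+1}$. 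If $\mathcal C$ has a descent at $i$, then the restriction of $\mathcal C$ to the rank-$2$ subinterval $[x_{i-1},x_{i+1}]$ is $\preceq$-decreasing, and by the property above $\flip_i(\mathcal C)$ agrees with $\mathcal C$ outside positions $\{i,i+1\}$ but has a strictly lex-smaller label sequence.

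Iterating the operation ``flip at some descent'' produces a strictly lex-decreasing sequence of maximal chains; since $[u,v]$ has only finitely many maximal chains, this process terminates, and the terminal chain has no descents, hence must equal $\mathcal C_{\min}$. Therefore every maximal chain of $[u,v]$ is flip-connected to $\mathcal C_{\min}$, and by transitivity any two maximal chains in $[u,v]$ are related by a sequence of flips. The main obstacle I anticipate is giving a clean self-contained justification of the rank-$2$ property in the second paragraph; this is where the dihedral structure of rank-$2$ reflection subgroups and the precise definition of reflection ordering enter, and one would either cite the $EL$-labeling result for Bruhat order directly from \cite[\S 5.2]{BB} or verify it by a short case analysis inside the relevant dihedral subgroup.
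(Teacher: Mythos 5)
Your proposal is correct and follows essentially the same route as the paper: both reduce an arbitrary maximal chain to the unique $\preceq$-increasing (lexicographically first) maximal chain by flipping at a descent, using the rank-$2$ property of reflection orderings to see that each such flip strictly decreases the chain in lexicographic order, and then conclude by finiteness/induction.
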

\begin{proof}
It is well known (see, e.g., \cite[Proposition 4.3]{Dyer}) that
 there is a  unique increasing maximal chain
${\mathcal Z}$ in $[u,v]$, and that it is lexicographically first
among all  maximal chains in $[u,v]$. Let ${\mathcal C}=(x_0, \ldots , x_d)$ be a maximal
chain in $[u,v]$. It is enough to show that  ${\mathcal C}$ and ${\mathcal Z}$
are connected by a sequence of flips. We prove this by induction on
the  number of maximal chains that are lexicographically smaller than
${\mathcal C}$.  If ${\mathcal C} \neq  {\mathcal Z}$ then there is $i
\in [d-1]$ such that $\lambda(x_{i-1},x_i) \succ
\lambda(x_{i},x_{i+1})$. Let $(x_0, \ldots , x_{i-1},y_i,x_{i+1},
 \ldots , x_d) :=  \flip_i({\mathcal C})$.  
Then, since in $[x_{i-1},x_{i+1}]$ there is a unique increasing maximal chain, $\lambda(x_{i-1},y_i) \prec \lambda(y_{i},x_{i+1})$, and since this increasing maximal 
chain is lexicographically first among all  maximal chains in $[x_{i-1},x_{i+1}]$, 
$\lambda(x_{i-1},y_i) \prec \lambda(x_{i-1},x_{i})$. 
Hence $\flip_i({\mathcal C})$ is lexicographically smaller than ${\mathcal C}$, and this concludes the proof.
\end{proof}

\begin{proof}[Proof of Theorem~\ref{thm:bruhat}] 
	By Theorems~\ref{thm:bottomchain} and \ref{thm:topchain}, there exist $u,v \in \Perm_n(D)$ 
	such that $\Perm_n(D) \subseteq [u,v]$. By Proposition \ref{pro:maxchain} there is a 
	maximal chain ${\mathcal C}$ in $[u,v]$ such that ${\mathcal C} \subseteq \Perm_n(D)$.
	By Proposition \ref{pro:square}
 the flip of any maximal chain in $[u,v]$ that is contained in $\Perm_n(D)$ is still
 contained in $\Perm_n(D)$. Hence, by Proposition \ref{pro:flipconn}, all maximal
 chains in $[u,v]$ are contained in $\Perm_n(D)$, so $[u,v] \subseteq \Perm_n(D)$.
\end{proof}

Figure~\ref{fig:S4} shows the partition of $\symm_4$ into Bruhat intervals arising as odd diagram classes.

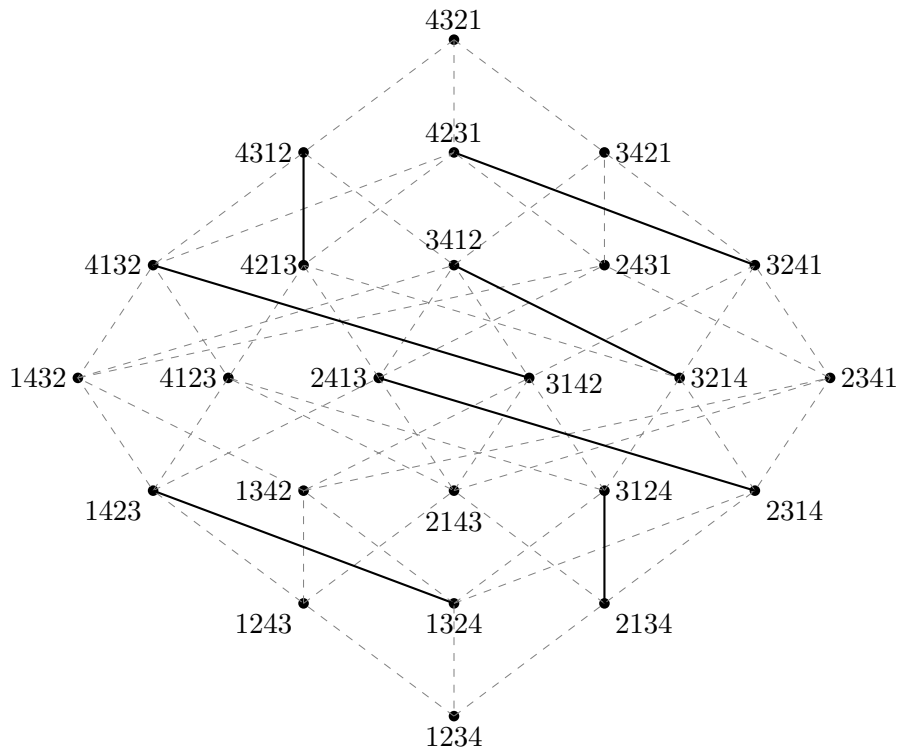
\begin{figure}[t]
\begin{tikzpicture}[scale=1]
\foreach \y in {0,9} {\fill[black] (0,\y) circle (2pt);}
\foreach \x in {-2,0,2} {\foreach \y in {1.5,7.5} {\fill[black] (\x,\y) circle (2pt);};}
\foreach \x in {-4,-2,0,2,4} {\foreach \y in {3,6} {\fill[black] (\x,\y) circle (2pt);};}
\foreach \x in {-5,-3,-1,1,3,5} {\fill[black] (\x,4.5) circle (2pt);}
\draw (0,0) coordinate (1234); \draw (0,9) coordinate (4321);
\draw (-2,1.5) coordinate (1243); \draw (0,1.5) coordinate (1324);
\draw (2,1.5) coordinate (2134); \draw (-2,7.5) coordinate (4312);
\draw (0,7.5) coordinate (4231); \draw (2,7.5) coordinate (3421);
\draw (-4,3) coordinate (1423); \draw (-2,3) coordinate (1342);
\draw (0,3) coordinate (2143); \draw (2,3) coordinate (3124);
\draw (4,3) coordinate (2314); \draw (-4,6) coordinate (4132);
\draw (-2,6) coordinate (4213); \draw (0,6) coordinate (3412);
\draw (2,6) coordinate (2431); \draw (4,6) coordinate (3241);
\draw (-5,4.5) coordinate (1432); \draw (-3,4.5) coordinate (4123);
\draw (-1,4.5) coordinate (2413); \draw (1,4.5) coordinate (3142);
\draw (3,4.5) coordinate (3214); \draw (5,4.5) coordinate (2341);
\draw[gray,dashed, very thin] (1234) -- (1243) -- (1423) -- (1432) -- (4132) -- (4312) -- (4321);
\draw[gray,dashed, very thin] (4321) --(3421) -- (3241) -- (2341) -- (2314) -- (2134) -- (1234) -- (1324);
\draw[gray,dashed, very thin] (1423) -- (4123) -- (4132) -- (4231) -- (4321);
\draw[gray,dashed, very thin] (1243) -- (1342) -- (1432) -- (3412) -- (4312);
\draw[gray,dashed, very thin] (1243) -- (2143) -- (4123) -- (4213);
\draw[gray,dashed, very thin] (1324) -- (1342) -- (3142);
\draw[gray,dashed, very thin] (1324) -- (3124) -- (4123);
\draw[gray,dashed, very thin] (1324) -- (2314);
\draw[gray,dashed, very thin] (2413) -- (4213) -- (4231);
\draw[gray,dashed, very thin] (2134) -- (2143) -- (2413) -- (3412) -- (3421);
\draw[gray,dashed, very thin] (3124) -- (3142) -- (3412);
\draw[gray,dashed, very thin] (1423) -- (2413) -- (2431) -- (4231);
\draw[gray,dashed, very thin] (1342) -- (2341) -- (2431) -- (3421);
\draw[gray,dashed, very thin] (2143) -- (3142) -- (3241);
\draw[gray,dashed, very thin] (2143) -- (2341);
\draw[gray,dashed, very thin] (3124) -- (3214) -- (4213);
\draw[gray,dashed, very thin] (2314) -- (3214);
\draw[gray,dashed, very thin] (1432) -- (2431);
\draw[gray,dashed, very thin] (3214) -- (3241);
\draw[thick] (3241) -- (4231);
\draw[thick] (3214) -- (3412);
\draw[thick] (4213) -- (4312);
\draw[thick] (3142) -- (4132);
\draw[thick] (2314) -- (2413);
\draw[thick] (1324) -- (1423);
\draw[thick] (2134) -- (3124);
\draw (1234) node[below] {\small $1234$};
\draw (4321) node[above] {\small $4321$};
\draw (2134) node[below right] {\small $2134$}; \draw (1324) node[below] {\small $1324$}; \draw (1243) node[below left] {\small $1243$};
\draw (1342) node[left] {\small $1342$}; \draw (1423) node[below left] {\small $1423$}; \draw (2143) node[below, yshift=-4pt] {\small$2143$}; \draw (2314) node[below right] {\small $2314$}; \draw (3124) node[ right] {\small $3124$};
\draw (1432) node[left] {\small$1432$}; \draw (3214) node[right] {\small$3214$}; \draw (4123) node[left] {\small$4123$}; \draw (2341) node[right] {\small$2341$}; \draw (2413) node[ left] {\small$2413$}; \draw (3142) node[right, xshift=2pt, yshift=-2pt] {\small$3142$};
\draw (4132) node[left] {\small$4132$}; \draw (4213) node[left, xshift=2pt] {\small$4213$}; \draw (3241) node[right] {\small$3241$}; \draw (2431) node[right] {\small$2431$}; \draw (3412) node[above, yshift=2pt] {\small$3412$};
\draw (4231) node[above] {\small$4231$}; \draw (4312) node[left] {\small$4312$}; \draw (3421) node[right] {\small$3421$};
\end{tikzpicture}
\caption{The partition of the symmetric group $\symm_4$ into odd diagram classes. Solid edges connect permutations within an odd diagram class. Each class in $\symm_4$ is either a singleton or a rank 1 Bruhat interval.}\label{fig:S4}
\end{figure}

\medskip

We conclude with a curious consequence of Theorem \ref{thm:bruhat}
that is, in a sense, dual to it, and with a conjecture.  It is clear
from the parity condition in Theorem~\ref{thm:legal swaps} that no
legal cover in Bruhat order is a covering relation in right weak
order, as these are given by adjacent transpositions. Theorem~\ref{thm:bruhat} has the following stronger
consequence.
\begin{corollary}\label{cor:antichain}
  Every odd diagram class $\Perm_n(D)$ is an antichain in right weak order.
\end{corollary}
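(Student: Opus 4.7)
The plan is to derive the antichain property directly from Theorem~\ref{thm:bruhat} combined with condition~(R1) of Theorem~\ref{thm:legal swaps}. Suppose, toward a contradiction, that there exist distinct $u, v \in \Perm_n(D)$ with $u \leq_R v$ in right weak order. Then there is a simple generator $s_i = (i\,\,i+1)$ such that $u$ is covered by $u s_i$ in right weak order and $u s_i \leq_R v$. Since right weak order refines Bruhat order, I would conclude that $u \lcov u s_i$ in Bruhat order and $u s_i \leq v$.

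By Theorem~\ref{thm:bruhat}, $\Perm_n(D) = [u_{\min}, v_{\max}]$ for some $u_{\min}, v_{\max} \in \symm_n$. From $u_{\min} \leq u \lcov u s_i \leq v \leq v_{\max}$, convexity of Bruhat intervals forces $u s_i \in \Perm_n(D)$, so the Bruhat edge $u \leftrightarrow u s_i$ is legal. But $u s_i$ is obtained from $u$ by swapping the values in positions $i$ and $i+1$, which have opposite parities, directly contradicting condition~(R1) of Theorem~\ref{thm:legal swaps}.

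Hence no two distinct elements of $\Perm_n(D)$ are comparable in right weak order. There is essentially no obstacle once Theorem~\ref{thm:bruhat} is available: the whole argument rests on observing that a single right weak order cover inside the class would manufacture a legal Bruhat edge between permutations differing by an adjacent transposition, which the parity condition~(R1) forbids.
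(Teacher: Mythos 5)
Your argument is correct and matches the paper's intended reasoning: the paper proves the corollary by exactly this combination of Theorem~\ref{thm:bruhat} (so that a weak-order cover inside the class stays inside the class, making it a legal Bruhat edge) with the parity condition (R1) of Theorem~\ref{thm:legal swaps}, which rules out adjacent transpositions. No issues.
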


\begin{proof}
   We denote by
$\leq _R $ the right weak order and by $\lcov _R $ the corresponding
covering relation. Suppose $u\sim v$ and $u\leq_R v$. Then $u$ cannot be covered by $v$, so there exists $w$ such that $u\lcov_R w \leq_R v$.  Since the same chain of relations holds in Bruhat order as well, by Theorem~\ref{thm:bruhat} this implies $w\sim u$, which is impossible. 
  \end{proof}
Computations with SageMath \cite{sagemath} suggest that  few isomorphism types of Bruhat intervals arise as odd diagram classes.
Moreover, based on  evidence for $n\leq 10$, we formulate the following.
\begin{conjecture}
  $\Perm_n(D)$ is rank-symmetric for any odd diagram $D$.
\end{conjecture}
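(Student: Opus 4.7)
By Theorem~\ref{thm:bruhat}, we have $\Perm_n(D) = [u,v]$ for the unique Bruhat-minimal $u$ and Bruhat-maximal $v$ of the class, so rank-symmetry amounts to palindromicity of the polynomial $\sum_{w \in [u,v]} q^{\ell(w)-\ell(u)}$. The plan is to construct an explicit length-reversing involution $\iota : \Perm_n(D) \to \Perm_n(D)$ satisfying $\ell(\iota(w)) = \ell(u) + \ell(v) - \ell(w)$, which would immediately give the result.

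The idea is to exploit the column-by-column construction of $u$ and $v$ given in Theorems~\ref{thm:bottomchain} and~\ref{thm:topchain}. By Lemma~\ref{lem:basic}, each column $k$ has a fixed parity $\varepsilon_k$, and for any $w \in \Perm_n(D)$ the entry $w^{-1}(k)$ lies in a set $\mathcal{R}_k(w)$ of admissible rows determined by the choices in earlier columns. First I would characterize $\mathcal{R}_k(w)$ precisely and show that its minimum and maximum are the rows occupied by $k$ in $u$ and $v$, respectively. Next I would define $\iota$ inductively: at column $k$, if $w^{-1}(k)$ is the $j$-th smallest element of $\mathcal{R}_k(w)$, place $k$ in the $j$-th largest admissible row for the partially-built $\iota(w)$. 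Finally I would verify that the inversions contributed by the column choices in $w$ and in $\iota(w)$ combine to yield the desired identity $\ell(w) + \ell(\iota(w)) = \ell(u)+\ell(v)$.

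The principal obstacle is that $\mathcal{R}_k$ depends on the earlier columns and the length contribution of a row choice in column $k$ is affected by the choices made in \emph{later} columns as well (through inversions involving values $>k$); even showing that $|\mathcal{R}_k(w)| = |\mathcal{R}_k(\iota(w))|$ is not obvious. One must either prove that all such dependencies cancel under the proposed involution, or replace the column-by-column scheme by a more global bijection, for instance one built by reflecting a maximal chain from Proposition~\ref{pro:maxchain} via the flip operation of Proposition~\ref{pro:flipconn}. Should the purely combinatorial route prove intractable, an alternative would be to interpret $[u,v]$ as indexing a Richardson variety and apply the Carrell--Peterson criterion, thereby reducing the conjecture to rational smoothness of this variety, which in turn might be verifiable from the explicit combinatorial structure of odd diagrams.
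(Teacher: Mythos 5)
This statement is left as an open conjecture in the paper --- the authors offer no proof, only computational evidence for $n\leq 10$ --- so there is no argument of theirs to compare yours against. More importantly, your proposal is not a proof either: it is a research plan whose central step, the involution $\iota$, is never actually constructed or verified. You yourself flag the fatal issue: the admissible-row sets $\mathcal{R}_k(w)$ depend on all earlier column choices, the length contribution of a choice in column $k$ depends on choices in later columns, and you have not shown $|\mathcal{R}_k(w)| = |\mathcal{R}_k(\iota(w))|$, let alone the identity $\ell(w)+\ell(\iota(w)) = \ell(u)+\ell(v)$. Until those cancellations are proved, nothing has been established. A cautionary data point from the paper itself: the interval $\Perm_9(D)=[654172839,958172634]$ is \emph{not} self-dual, so any involution realizing rank-symmetry cannot come from a poset antiautomorphism; a construction as structurally uniform as the one you sketch would need to be checked carefully against exactly this kind of example.

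The fallback routes you mention are likewise not arguments. Reflecting a maximal chain via the flip operation of Proposition~\ref{pro:flipconn} moves between maximal chains of $[u,v]$; it does not produce a rank-reversing bijection of the whole interval. The geometric suggestion has the logic reversed: the Carrell--Peterson-type criterion for Richardson varieties derives palindromicity of the rank generating function \emph{from} rational smoothness, so you would first have to prove rational smoothness of the relevant Richardson variety --- a claim at least as strong as the conjecture, and one for which you give no evidence from the combinatorics of odd diagrams. In short, the statement remains open, and your proposal, while it identifies reasonable lines of attack (the column-by-column extremal constructions of Theorems~\ref{thm:bottomchain} and~\ref{thm:topchain} are indeed the natural starting point), does not close any of the gaps it names.
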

Bruhat intervals arising as odd diagram classes are not, however, self-dual in general.
For example, if $D= \{ (1,1),(1,2),(1,3),(1,5),(2,4),(3,1),(3,2),(3,3),(5,2),(5,3),(7,3)\}$ and $n=9$ then $\Perm_9(D)=[654172839,958172634]$ and one can check that this interval is not self-dual.

\begin{ackn}
  This research was supported by the Swedish Research Council under
  grant no.\ 2016-06596 while the authors visited the Institut
  Mittag-Leffler in Djursholm, Sweden during the program ``Algebraic
  and enumerative combinatorics'' in~2020. FB was partially supported
  by the MIUR Excellence Department Project CUP~E83C18000100006. AC
  would like to thank Tobias Rossmann for help with some
  computations. Research of BT was partially supported by Simons
  Foundation Collaboration Grant for Mathematicians 277603, a
  University Research Council Competitive Research Leave from DePaul
  University, a DePaul University Faculty Summer Research Grant, and
  NSF Grant DMS-2054436.  The authors would like to thank the
  anonymous referees for their careful reading of the manuscript and
  helpful comments and suggestions.
  \end{ackn}

\end{document}